\newcommand{\D}{\mathrm{D}} 
\renewcommand{\d}{\mathrm{d}}            
\newcommand\rvec{{\bf r}}
\newcommand\nvec{{\bf n}}
\newcommand\xvec{{\bf x}}
\newcommand\pvec{{\bf p}}
\newcommand\qvec{{\bf q}}
\newcommand\Qvec{{\bf Q}}
\newcommand{\Rr}{{\mathbb R}} 
\newcommand\dd{\mathrm{d}}
\newcommand\pp{\partial}
\newcommand\x{\mathbf{x}}
\newcommand\e{\mathbf{e}}
\newcommand{\Ivec}{\mathbf{I}}           
\DeclareMathOperator{\tr}{tr}            
\newcommand{\abs}[1]{\left| #1 \right|}  
\title{Order Reconstruction for Nematics on Squares with Isotropic Inclusions: A Landau-de Gennes Study}
\author{Yiwei Wang, Giacomo Canevari \& Apala Majumdar}
\begin{document}
\maketitle
\begin{abstract}
We prove the existence of a well order reconstruction solution (WORS) type Landau-de Gennes critical point on a square domain with an isotropic concentric square inclusion, with tangent boundary conditions on the outer square edges. There are two geometrical parameters --- the outer square edge length $\lambda$, and the aspect ratio $\rho$, which is the ratio of the inner and outer square edge lengths. The WORS exists for all geometrical parameters and for all temperatures; we prove that the WORS is globally stable for either $\lambda$ small enough or for $\rho$ sufficiently close to unity. We 
study three different types of critical points in this model setting: critical points with the minimal two degrees of freedom consistent with the imposed boundary conditions, critical points with three degrees of freedom and critical points with five degrees of freedom. In the two-dimensional case, we use $\Gamma$-convergence techniques to identify the energy-minimizing competitors. We decompose the second variation of the Landau-de Gennes energy into three separate components to study the effects of different types of perturbations on the WORS solution and find that it is most susceptible to in-plane perturbations. In the three-dimensional setting, we numerically find up to $28$ critical points for moderately large values of $\rho$ and we find two critical points with the full five degrees of freedom for very small values of $\rho$, with an escaped profile around the isotropic square inclusion.
\end{abstract}

\section{Introduction}
\label{sec:intro}

Nematic liquid crystals (NLCs) are classical examples of partially ordered materials that combine the fluidity of liquids with a degree of long-range orientational order \cite{dg,virga}. There is substantial interest in pattern formation for NLCs in confinement, of which NLC-filled square chambers are popular examples \cite{tsakonas, lewissoftmatter, cleaver, luo2012}. This paper focuses on stable NLC configurations for square geometries with a square hole, referred to as an isotropic inclusion which locally destroys the surrounding nematic ordering. Such holes can be created by laser treatments or e-beam lithography techniques~\cite{e-beam} 
and domains with inclusions offer new possibilities for exotic pattern formation.

This paper is primarily motivated by the numerical results in \cite{kraljmajumdar} and the analytical results in \cite{majcanevarispicer2017}, both within the celebrated Landau-de Gennes (LdG) theory for nematic liquid crystals. The LdG theory describes the nematic state by a macroscopic order parameter, the $\mathbf{Q}$-tensor order parameter which is, mathematically speaking, a symmetric traceless $3\times 3$ matrix. The eigenvectors of the $\mathbf{Q}$-tensor represent the physically preferred directions for molecular alignment or the directions of orientational ordering and the corresponding eigenvalues are a measure of the degree of the order about the eigenvectors \cite{ejam2010, dg, virga}. In \cite{kraljmajumdar}, the authors numerically discover a novel ``Well Order Reconstruction Solution'' (WORS) for square domains with tangent boundary conditions on the square edges. This WORS solution has a constant eigenframe, featured by a cross that connects the square diagonals such that the $\mathbf{Q}$-tensor has two degenerate positive eigenvalues and a distinct negative eigenvalue along the diagonal cross, referred to as negative uniaxiality which is a signature of nematic defects. The WORS is globally stable for small square domains, typically of the order of tens to hundreds of nanometers. In \cite{majcanevarispicer2017}, the authors analytically prove the existence of the WORS solution for all square sizes and at a special temperature, reduce the analysis of the WORS solution to a scalar variational problem. The authors prove the global stability of the WORS solution for small square domains, the instability of the WORS solution for larger domains and prove that the WORS solution branch undergoes a supercritical pitchfork bifurcation as the square size increases, in the reduced scalar setting. The supercritical pitchfork bifurcation result is quite specific to the scalar problem and it is not clear if it holds for the full LdG problem with five degrees of freedom. 

In \cite{kraljmajumdar}, the authors numerically study the effects of square inclusions or square holes on the stability and properties of the WORS on square domains. For concentric square inclusions (i.e. square holes that have the same centre as the square domain), the WORS exists although the stability properties depend on both the square size and the domain aspect ratio (the ratio of the inclusion size to the domain size). For an off-centered square inclusion, we lose the distinctive diagonal cross and the regions of negative uniaxiality become localised near the square edges. 
In this paper, we study square domains with concentric isotropic square inclusions. Mathematically, we study a  boundary value problem for the LdG $\mathbf{Q}$-tensor on this domain, with $\mathbf{Q}=0$ on the inclusion boundary and with Dirichlet boundary conditions on the outer boundary consistent with the experimentally imposed tangent boundary conditions in \cite{tsakonas}. 
We prove the existence of a WORS-like solution for this model problem, with a constant eigenframe and a diagonal cross (along which the LdG $\mathbf{Q}$-tensor has two equal eigenvalues) that connects the vertices of the inner and outer squares. This existence theorem is true for all square sizes and aspect ratios (ratio of the inner square size to the outer square size) and for all temperatures. Following the arguments in \cite{lamy2014} and \cite{majcanevarispicer2017}, we can also prove that the WORS is globally stable, i.e. is the global minimizer of the LdG energy for this model problem, for either squares that are sufficiently small or for aspect ratios sufficiently close to unity. In this sense, we provide some theoretical foundations for the numerical results in \cite{kraljmajumdar}. 

The analysis of the WORS is inherently two-dimensional in the presence of a square inclusion by contrast with the framework in \cite{majcanevarispicer2017} where the authors could study a scalar variational problem at a special temperature. We have conflicting boundary conditions on the inner and outer squares and we need to exploit two out of the five degrees of freedom of the LdG $\mathbf{Q}$-tensor, to describe the WORS for all temperatures. We perform a $\Gamma$-convergence analysis of a reduced LdG energy, in terms of these two degrees of freedom, to deduce qualitative properties of energy minimizers in this two-dimensional setting, in the limit of the square size $\lambda \to \infty$. We are able to identify at least three competing configurations in the reduced two-dimensional setting: the WORS configuration, a BD (boundary distortion)-configuration with a pair of distinctive edge transition layers along which the LdG $\mathbf{Q}$-tensor transitions between two distinct states and an ESC-configuration around which the nematic molecules escape into the third dimension around the isotropic inclusion. We compute specific minimality criteria of the WORS in terms of the material constants, the temperature and the geometric aspect ratio, in this asymptotic limit.

The $\Gamma$-convergence analysis is complemented by a detailed numerical study of the critical points of the LdG energy for this model problem, using finite-difference based numerical methods and deflation techniques \cite{farrell2015deflation}. Numerical investigations show that the ESC-configuration cannot have lower LdG energy than the WORS or BD configurations, which is also corroborated by the minimality estimates for the WORS, BD and ESC-configurations yielded by the $\Gamma$-convergence analysis in the $\lambda \to \infty$ limit. Hence, we restrict ourselves to a detailed study of the stability of the WORS and BD configurations, both of which have constant eigenframes and have distinct defect lines or transition layers. In the case of the WORS, the transition layers are supported along the diagonals and for the BD solution, along a pair of opposite square edges. We study the second variation of the LdG energy and decompose the second variation into three components --- the second variation in the two-dimensional class of perturbations that do not distort the constant eigenframes of the WORS and BD configurations, the second variation with respect to in-plane perturbations of the eigenframe and the second variation with respect to out-of-plane perturbations of the eigenframe. We believe that this decomposition will be useful for stability analysis of general critical points for more general model problems. We numerically test the stabilities of the WORS and the BD-configurations with respect to the three different kinds of perturbations at the special temperature employed in \cite{majcanevarispicer2017}, primarily to reduce the number of variables in the problem and this temperature is a special reference point. As expected, we find that the WORS is globally stable with respect to all perturbations for aspect ratios that are sufficiently close to unity i.e. narrow square annuli. Both the WORS and BD configurations are stable with respect to out-of-plane perturbations. It is interesting that the BD-configuration is always unstable with respect to in-plane perturbations i.e. the BD-configuration is never a stable critical point of the LdG energy for this model problem.

We briefly comment on how these results relate to the numerical results in \cite{martinrobinson2017} where the authors study the solution landscape as a function of the square size in a three-dimensional LdG framework, neglecting the out-of-plane components. They numerically find that the WORS solution branch which is globally stable for small squares and loses stability as the square size increases. The WORS solution loses stability with respect to BD-like configurations in the restricted two-dimensional class of perturbations which preserve the constant eigenframe but these BD-configurations are unstable in the class of perturbations which allow for in-plane distortions of the eigenframe. Indeed, the authors numerically observe at least four bifurcating solution branches from the WORS solution branch --- two unstable BD solution branches and two stable diagonal solution branches which do not have the constant eigenframe property. For larger squares, the BD solution branches connect to the familiar stable rotated solutions, which do not have a constant eigenframe, and for which the nematic molecules rotated by $\pi$ radians in the square plane, between a pair of parallel square edges. 

Finally, we comment on why the WORS and BD-configurations are stable with respect to all out-of-plane perturbations for this model problem. It is rigorously proven in \cite{sternberggolovaty2015} that for certain thin geometries (where the vertical dimension is much smaller than the lateral dimensions) and for certain surface energies consistent with tangent boundary conditions, the LdG energy minimization problem reduces to a variational problem on the two-dimensional cross-section (such as the square domain in our case) and energy minimizers indeed only have three degrees of freedom. The energy minimizers have a fixed eigenvector in the $\mathbf{z}$-direction; one degree of freedom describes the in-plane alignment of the NLC molecules and two scalar order parameters account for the in-plane ordering and the ordering about the $\mathbf{z}$-direction.
For the WORS and the BD-configurations, the in-plane alignment is fixed by the constant eigenframe and hence, they belong to a sub-class of this reduced three-dimensional setting. In light of the rigorous results in \cite{sternberggolovaty2015}, it is not surprising that the instabilities arise in the reduced three-dimensional setting.

The paper is organized as follows. In Section~\ref{sec:prelimiaries} and \ref{sec:reduction}, we set up the geometric domain and the problem definition, along with recalling the mathematical framework of the LdG theory and proving the existence and uniqueness theorems for the WORS. In Section~\ref{sec:gamma}, we perform the $\Gamma$-convergence analysis for the limit of large domains and in Section~\ref{sec:numerics}, we present and analyse our numerical results. In Section~\ref{sec:conclusion}, we briefly present our conclusions.
\section{Preliminaries}
\label{sec:prelimiaries}

We model nematic profiles on two-dimensional squares with an isotropic inclusion within the Landau-de Gennes (LdG) theory for nematic liquid crystals.
The LdG theory is one of the most powerful continuum theories for nematic liquid crystals and describes the nematic state by a macroscopic order parameter 
--- the LdG $\mathbf{Q}$-tensor that is a macroscopic measure of material anisotropy. 
The LdG $\Qvec$-tensor  is a symmetric traceless $3\times 3$ matrix i.e. 
$$\Qvec \in S_0 := \left\{ \Qvec\in \mathbb{M}^{3\times 3}\colon Q_{ij} = Q_{ji}, \ Q_{ii} = 0 \right\}.$$
A $\Qvec$-tensor is said to be (i) isotropic if~$\Qvec=0$, (ii) uniaxial if $\Qvec$
has a pair of degenerate non-zero eigenvalues and (iii) biaxial if~$\Qvec$ has three distinct eigenvalues~\cite{dg,newtonmottram}. A
uniaxial $\Qvec$-tensor can be written as $\Qvec_u = s \left(\nvec \otimes \nvec - \Ivec/3\right)$ with~$\Ivec$ the $3\times 3$ identity matrix,
$s\in\Rr$ and~$\nvec\in S^2$, a unit vector. The scalar, $s$, is an order parameter which measures the degree of orientational order.
The vector, $\nvec$, is the eigenvector with the non-degenerate eigenvalue, referred to as the ``director'' and labels the single distinguished direction of uniaxial nematic alignment~\cite{virga,dg}. 


We work with a simple form of the LdG energy given by
\begin{equation} \label{eq:2} 
 I[\Qvec] :=  \int_{\Omega} \frac{L}{2} \left|\nabla\Qvec \right|^2 + f_B(\Qvec) \, \mathrm{d}A,
\end{equation}
where $\Omega\subseteq\Rr^2$ is a two-dimensional domain,
\begin{equation} \label{eq:3}
 |\nabla \Qvec |^2 := \frac{\partial Q_{ij}}{\partial r_k}\frac{\partial Q_{ij}}{\partial r_k},
 \qquad f_B(\Qvec) := \frac{A}{2} \tr\Qvec^2 - \frac{B}{3} \tr\Qvec^3 + \frac{C}{4}\left(\tr\Qvec^2 \right)^2.
\end{equation}

The variable~$A = \alpha (T - T^*)$ is the re-scaled temperature, $\alpha$, $L$, $B$, $C>0$ are material-dependent constants
and~$T^*$ is the characteristic nematic supercooling temperature~\cite{dg,newtonmottram}.
Further~$\rvec:=(x, \, y)$, $\tr\Qvec^2 = Q_{ij}Q_{ij}$ and $\tr\Qvec^3 = Q_{ij} Q_{jk}Q_{ki}$ for $i$, $j$, $k = 1, \, 2, \, 3$.
It is well-known that all stationary points of the thermotropic
potential, $f_B$, are either uniaxial or isotropic~\cite{dg,newtonmottram,ejam2010}.
The re-scaled temperature~$A$ has three characteristic values: (i)~$A=0$, below which the isotropic phase $\Qvec=0$ loses stability, 
(ii) the nematic-isotropic transition temperature, $A={B^2}/{27 C}$, at which $f_B$ is minimized by the isotropic
phase and a continuum of uniaxial states with $s=s_+ ={B}/{3C}$ and
$\nvec$ arbitrary, and (iii) the nematic supercooling temperature, $A = {B^2}/{24 C}$, above which the isotropic state is the unique critical point of $f_B$.

We work with $A<0$ i.e. low temperatures and the numerical work in this paper focuses on a special temperature,
$A=-{B^2}/{3C}$, largely to facilitate comparison with~\cite{majcanevarispicer2017}.
Our analytical results are true for all temperatures, $A<0$. For a given $A<0$, let
$\mathscr{N} := \left\{ \Qvec \in S_0\colon \Qvec = s_+ \left(\nvec\otimes \nvec - \Ivec/3 \right) \right\}$ 
denote the set of minimizers of the bulk potential, $f_B$, with
\[
 s_+ := \frac{B + \sqrt{B^2 + 24|A| C}}{4C}
\]
and~$\nvec \in S^2$ arbitrary. In particular, this set is relevant to our choice of Dirichlet conditions for boundary-value problems.

We non-dimensionalize the system using a change of variables, $\bar{\rvec} = \rvec/ \lambda$,
where $\lambda$ is a characteristic length scale of the system.
The re-scaled LdG energy functional is then given by
\begin{equation} \label{eq:rescaled}
 \overline{I}[\Qvec] := \frac{I[\Qvec]}{L \lambda} = \int_{\overline{\Omega}}\frac{1}{2}\left| \overline{\nabla} \Qvec \right|^2 
 + \frac{\lambda^2}{L} f_B\left(\Qvec \right) \, \overline{\mathrm{d}A}.
\end{equation}
In~\eqref{eq:rescaled}, $\overline{\Omega}$ is the re-scaled domain, $\overline{\nabla}$ is the gradient with respect to
the re-scaled spatial coordinates and $\overline{\mathrm{d}A}$ is the re-scaled area element. The associated Euler-Lagrange equations are 
%
\begin{equation} \label{eq:6} 
 \bar{\Delta} \Qvec = \frac{\lambda^2}{L} \left\{ A\Qvec - B\left(\Qvec\Qvec - \frac{\Ivec}{3}|\Qvec|^2 \right)
 + C|\Qvec|^2 \Qvec \right\},
\end{equation}
where $(\Qvec\Qvec)_{ik} = Q_{ij}Q_{jk}$ with $i$, $j$, $k=1, \, 2, \, 3$. The system~\eqref{eq:6} comprises five coupled nonlinear
elliptic partial differential equations.
We treat $A$, $B$, $C$, $L$ as fixed constants and vary $\lambda$. 
In what follows, we drop the \emph{bars} and all statements are to be understood in terms of the re-scaled variables.


\section{The Variational Problem}
\label{sec:reduction}

\begin{figure}[t]
	\centering
 	\includegraphics[height=5 cm]{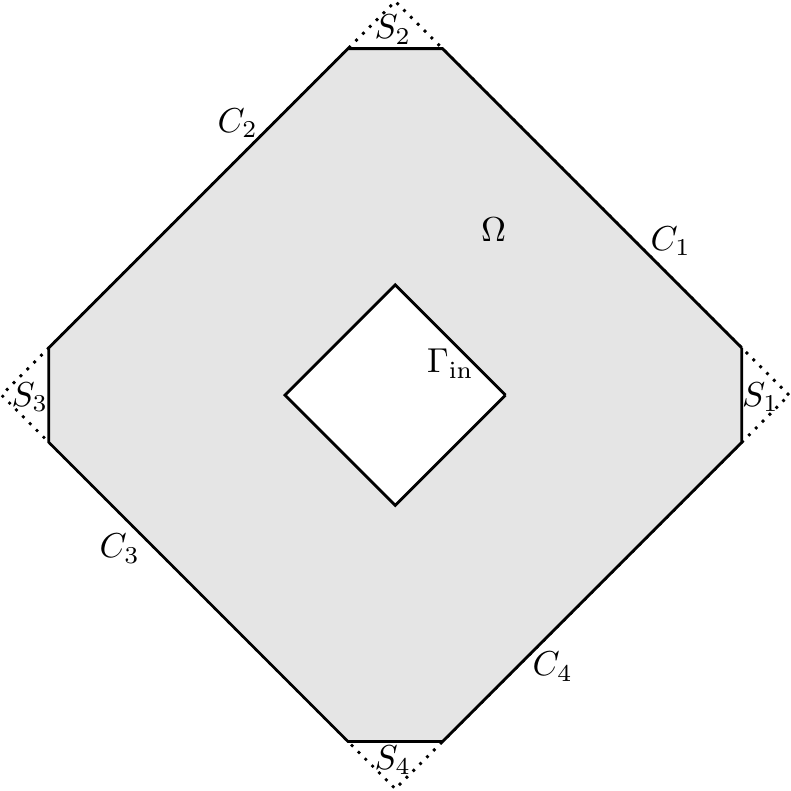} 
	\caption{The domain~$\Omega$.}
	\label{fig:domain}
\end{figure}

We take the rescaled domain~$\Omega\subseteq\Rr^2$ to be a truncated square with a
square inclusion. More precisely, for fixed~$0 < \rho < 1$, we define
\begin{equation} \label{eq:domain}
 \Omega := \left\{(x, \, y)\in\Rr^2\colon |x| < 1 - \varepsilon, \ |y| < 1 - \varepsilon,
 \ \rho < |x+y| < 1, \ \rho < |x-y| < 1 \right\} \! .
\end{equation}
The boundary, $\partial\Omega$, has two components, an inner boundary and an outer boundary.
The inner boundary, $\Gamma_{\mathrm{in}}$, is a square whose diagonals are parallel
to the coordinate axes, with side length~$\sqrt{2}\rho$.
The outer boundary, $\Gamma_{\mathrm{out}}$, consists of four ``long''
edges~$C_1, \, \ldots, \, C_4$, parallel to the lines~$y = x$ and~$y = -x$, and four
``short'' edges~$S_1, \, \ldots, \, S_4$, of length~$2\varepsilon$, parallel to the
$x$ and $y$-axes respectively. The long edges~$C_i$ are labeled counterclockwise
and $C_1$ is the edge contained in the first quadrant, i.e.
\[
 C_1 := \left\{(x, \, y)\in\Rr^2\colon x + y = 1, \ \varepsilon \leq x \leq 1 - \varepsilon \right\} \! .
\]
The short edges~$S_i$ 
are also labeled counterclockwise and
\[
 S_1 := \left\{(1 - \varepsilon, \, y)\in\Rr^2\colon |y|\leq \varepsilon\right\} \! .
\]
The domain is illustrated in Figure~\ref{fig:domain}.
We work with Dirichlet conditions on $\partial\Omega$. To mimic the isotropic inclusion, 
we impose \emph{isotropic} boundary conditions on the inner boundary~$\Gamma_{\mathrm{in}}$,
that is, we require
\begin{equation}\label{eq:bc0}
 \Qvec(\rvec) = \Qvec_{\mathrm{b}}(\rvec) := 0 \qquad \textrm{for } \rvec\in \Gamma_{\mathrm{in}}.
\end{equation}
We impose \emph{tangent} uniaxial Dirichlet conditions on the long edges, $C_1, \, \ldots, \, C_4$.
We fix $\Qvec = \Qvec_{\mathrm{b}}$ on $C_1, \, \ldots, \, C_4$ where
\begin{equation} \label{eq:bc1}
 \Qvec_{\mathrm{b}}(\rvec) := \begin{cases} 
  s_+\left( \nvec_1 \otimes \nvec_1 - \dfrac{\Ivec}{3} \right)  & \textrm{for } \rvec \in C_1 \cup C_3 \\
  s_+\left( \nvec_2 \otimes \nvec_2 - \dfrac{\Ivec}{3} \right)  & \textrm{for } \rvec \in C_2 \cup C_4;
 \end{cases}
\end{equation}
and
\[
 \nvec_1 := \frac{1}{\sqrt{2}}\left(-1, \, 1\right), \qquad  \nvec_2 := \frac{1}{\sqrt{2}}\left(1, \, 1 \right).
\]
The Dirichlet condition on the short edges is defined in terms of a 
function~$g\colon [-\varepsilon, \, \varepsilon]\to [-s_+/2, s_+/2]$. 
We assume that~$g$ is smooth (at least of class~$C^1$), odd
(i.e. $g(-s) = -g(s)$ for any~$s$), and satisfies $g(\varepsilon) = s_+/2$;
for instance, an admissible choice for~$g$ is
\[
 g(s) := \frac{s_+}{2\varepsilon} s \qquad 
 \textrm{for } -\varepsilon \leq s \leq\varepsilon.
\]
We fix $\Qvec = \Qvec_{\mathrm{b}}$ on $S_1, \, \ldots, \, S_4$ where
\begin{equation} \label{eq:bc2}
 \Qvec_{\mathrm{b}} := \begin{cases}
  g(y) \left(\nvec_1 \otimes \nvec_1 - \nvec_2\otimes \nvec_2 \right) - \dfrac{s_+}{6}\left(2 \hat{\mathbf{z}}\otimes\hat{\mathbf{z}}
  - \nvec_1\otimes \nvec_1 - \nvec_2\otimes \nvec_2 \right)  & \textrm{on  } S_1\cup S_3, \\
  g(x)\left(\nvec_1 \otimes \nvec_1 - \nvec_2\otimes \nvec_2 \right) - \dfrac{s_+}{6}\left(2 \hat{\mathbf{z}}\otimes\hat{\mathbf{z}}
  - \nvec_1\otimes \nvec_1 - \nvec_2\otimes \nvec_2 \right) & \textrm{on  } S_2\cup S_4.
 \end{cases}
\end{equation}
Given the Dirichlet conditions~\eqref{eq:bc0},~\eqref{eq:bc1} and~\eqref{eq:bc2},
we define our admissible space to be
\begin{equation} \label{eq:admissible}
 \mathscr{A} := \left\{ \Qvec \in W^{1,2}\left(\Omega, \, S_0 \right)\!\colon \Qvec = \Qvec_{\mathrm{b}}~\textrm{on} ~\partial \Omega \right\}.
\end{equation} 

We look for critical points of the re-scaled functional~\eqref{eq:rescaled} of the form
\begin{equation} \label{eq:d1}
 \begin{split}
  \Qvec(x, \, y) &= q_1(x, \, y) \left(\nvec_1 \otimes \nvec_1 - \nvec_2\otimes \nvec_2 \right) +
  q_3(x, \, y) \left(2 \hat{\mathbf{z}}\otimes\hat{\mathbf{z}} - \nvec_1\otimes \nvec_1 - \nvec_2\otimes \nvec_2 \right)
 \end{split}
\end{equation}
subject to the boundary conditions
\begin{equation} \label{eq:d2}
 q_1(x, \, y) = q_{1,\mathrm{b}} (x, \, y) :=
 \begin{cases}
  0           & \textrm{on  } \Gamma_{\mathrm{in}} \\
  {s_+}/{2}  & \textrm{on  } C_1\cup C_3 \\
 - {s_+}/{2}   & \textrm{on  } C_2\cup C_4 \\
  g(y)        & \textrm{on  } S_1\cup S_3 \\
  g(x)        & \textrm{on  } S_2\cup S_4;
 \end{cases}
\end{equation}
and
\begin{equation} \label{eq:d3}
 q_3(x, \, y) = q_{3,\mathrm{b}} (x, \, y) :=
 \begin{cases}
  0           & \textrm{on  } \Gamma_{\mathrm{in}} \\
  -{s_+}/{6}  & \textrm{on  } \Gamma_{\mathrm{out}}.
 \end{cases}
\end{equation}
For solutions of the form~\eqref{eq:d1}, the LdG Euler-Lagrange system (\ref{eq:6}) reduces to
\begin{equation} \label{eq:d4}
 \begin{aligned}
   \Delta q_1 &= \frac{\lambda^2}{L}\left\{A q_1 + 2B q_1 q_3 +
      2C\left(q_1^2 + 3q_3^2\right) q_1 \right\} \\
   \Delta q_3 &= \frac{\lambda^2}{L}\left\{A q_3 + B \left(\frac{1}{3}q_1^2 - q_3^2\right) +       
      2C\left(q_1^2 + 3q_3^2\right) q_3\right\} \! .
 \end{aligned}
\end{equation}
The partial differential equations \eqref{eq:d4} are precisely the Euler-Lagrange
equations associated with the functional
\begin{equation} \label{eq:J}
  J_\lambda[q_1, q_3]: = \int_{\Omega} \left( |\nabla q_1|^2 + 3 |\nabla q_3|^2
     + \frac{\lambda^2}{L} F(q_1,\, q_3)\right) \d A,
\end{equation}
where~$F$ is the polynomial potential given by
\begin{equation} \label{F}
 F(q_1, \,q_3) := A (q_1^2 + 3 q_3^2) + 2 B \, q_3 \,(q_1^2 - 2  q_3^2) 
  + C(q_1^2 + 3 q_3^2)^2 - F_{\min}
\end{equation}
and~$F_{\min} := As_+^2/3 - 2Bs_+^3/27 + Cs_+^4/9$ is a constant chosen so
that $\inf F=0$. By solving the criticality conditions~$\nabla_{(q_1, q_3)} F = 0$,
we find that~$F$ has exactly four critical points in the $(q_1, \, q_3)$-plane: 
the origin~$(0, \, 0)$, which is a local maximum,
and the points
\begin{equation} \label{critical_points}
 \pvec_1 := (-s_+/2,  \, -s_+/6), \qquad \pvec_2 := (s_+/2,  \, -s_+/6), 
 \qquad \pvec_3 := (0,  \, s_+/3),
\end{equation}
which are global minima. These critical points are illustrated in Figure~\ref{fig:criticalpoints}.

\begin{figure}[t]
	\centering 
 	\includegraphics[height=4.4cm]{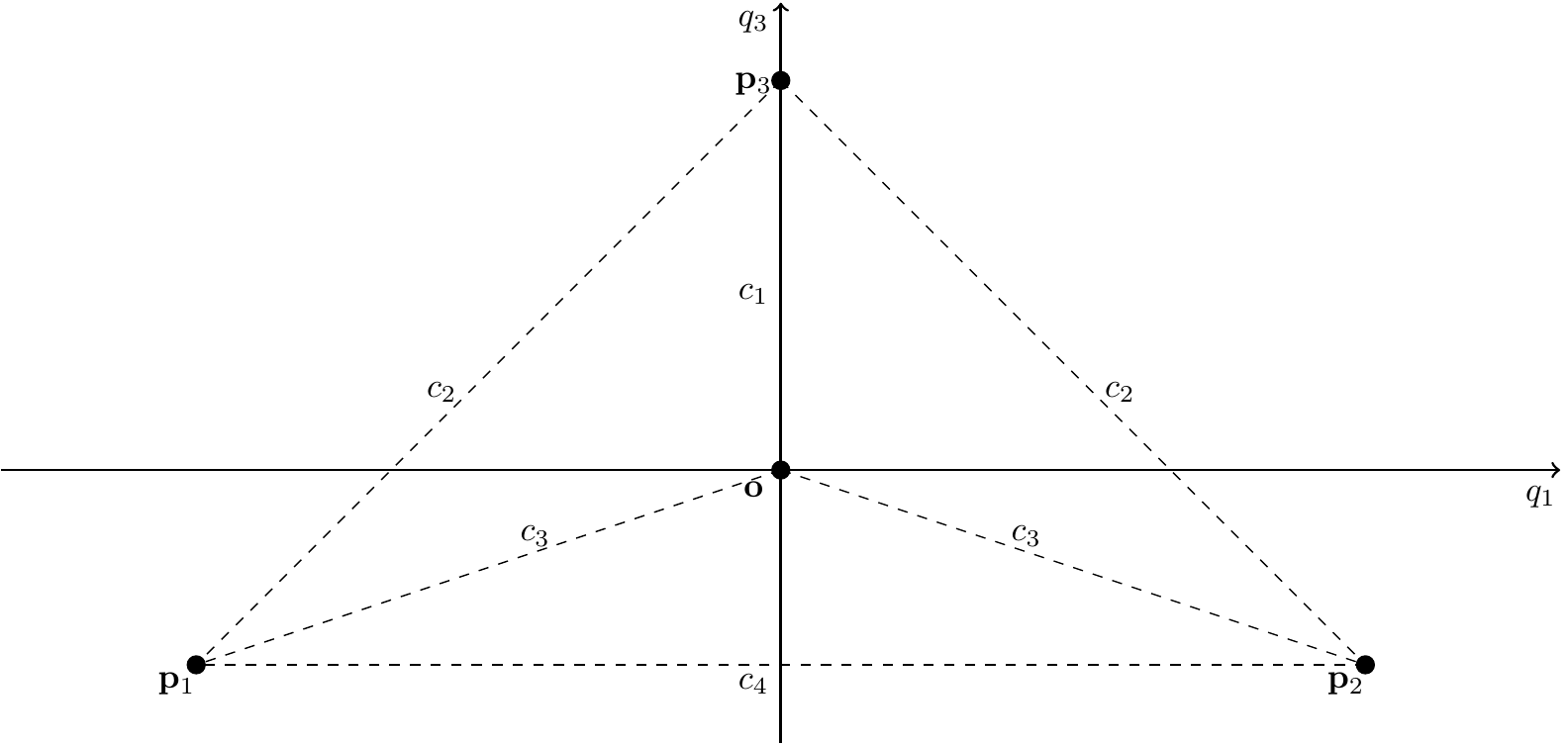}
	\caption{The four critical points of the
	potential~$F$, which is defined by Eq.~\ref{F}. The dashed lines indicate the
	``transition costs'' that are defined by Eq.~\ref{costs}}
	\label{fig:criticalpoints}
\end{figure}

\begin{proposition} \label{prop:1}
We have a critical point $(q_1^s, q_3^s)$, of the functional (\ref{eq:J}) in the admissible space (\ref{eq:admissible}), subject to the boundary conditions (\ref{eq:d2}) and (\ref{eq:d3}), such that $q_1 = 0$ on $x = 0$ and $y = 0$. This in turn defines a LdG critical point of the form (\ref{eq:d1}), referred to as a Well Order Reconstruction ``WORS'' critical point for a square with an isotropic inclusion.
\end{proposition}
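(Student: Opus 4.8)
The plan is to realise the WORS as the minimiser of the reduced energy $J_\lambda$ over a symmetric subclass of admissible pairs, and then to promote this constrained minimiser to a genuine critical point by the principle of symmetric criticality. Write the ansatz \eqref{eq:d1} as $\Qvec = q_1 E_1 + q_3 E_3$ with $E_1 := \nvec_1\otimes\nvec_1 - \nvec_2\otimes\nvec_2$ and $E_3 := 2\zhat\otimes\zhat - \nvec_1\otimes\nvec_1 - \nvec_2\otimes\nvec_2$, and let $\mu(x,y):=(-x,y)$ and $\tau(x,y):=(x,-y)$, which are isometries of $\Omega$ permuting the edges $C_i$, $S_i$ and fixing $\Gamma_{\mathrm{in}}$. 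On $W^{1,2}(\Omega;\Rr^2)$ define the linear isometries $R_\mu(q_1,q_3):=(-q_1\circ\mu,\,q_3\circ\mu)$ and $R_\tau(q_1,q_3):=(-q_1\circ\tau,\,q_3\circ\tau)$; they generate a group $G\cong\mathbb{Z}_2\times\mathbb{Z}_2$. Because the potential $F$ in \eqref{F} depends on $q_1$ only through $q_1^2$ and the gradient term $\int(|\nabla q_1|^2+3|\nabla q_3|^2)$ is invariant under composition with isometries, $J_\lambda\circ R_g=J_\lambda$ for all $g\in G$; and because $\nvec_1,\nvec_2$ are oriented at $45^\circ$ to the axes (so that the tangent director alternates between adjacent long edges) while $g$ is odd, the boundary data \eqref{eq:d2}--\eqref{eq:d3} is $G$-equivariant, namely $q_{3,\mathrm{b}}$ is invariant and $q_{1,\mathrm{b}}$ is odd under both $\mu$ and $\tau$. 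Hence the admissible class prescribed by \eqref{eq:d2}--\eqref{eq:d3} and its fixed-point subclass $\mathscr{A}^G$ (admissible pairs with $q_1$ odd and $q_3$ even under each coordinate reflection) are $G$-invariant.

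First I would prove existence of a minimiser of $J_\lambda$ over $\mathscr{A}^G$. This subclass is non-empty: averaging any $W^{1,2}$-extension of $(q_{1,\mathrm{b}},q_{3,\mathrm{b}})$ over $G$ produces an element of $\mathscr{A}^G$, the boundary trace being preserved since the data is $G$-equivariant. Coercivity, weak lower semicontinuity, and weak closedness of $\mathscr{A}^G$ are standard: one uses $F\geq 0$, Poincaré's inequality after subtracting a fixed extension, the compact embedding $W^{1,2}(\Omega)\hookrightarrow L^4(\Omega)$ to pass to the limit in $\int F$, and the fact that $\mathscr{A}^G$ is a closed affine subspace. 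The direct method then yields a minimiser $(q_1^s,q_3^s)\in\mathscr{A}^G$.

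Next I would invoke symmetric criticality. Minimality over the affine space $\mathscr{A}^G$ makes the first variation of $J_\lambda$ at $(q_1^s,q_3^s)$ vanish along every $\phi$ in the symmetric test space $(W^{1,2}_0(\Omega;\Rr^2))^G$. For an arbitrary test field $\phi\in W^{1,2}_0(\Omega;\Rr^2)$, linearity of the $R_g$ together with $J_\lambda\circ R_g=J_\lambda$ and $R_g(q_1^s,q_3^s)=(q_1^s,q_3^s)$ show that the first variation of $J_\lambda$ at $(q_1^s,q_3^s)$ in the direction $\phi$ equals that in the direction $R_g\phi$ for every $g\in G$, hence equals the first variation in the direction of the symmetric field $\frac1{|G|}\sum_{g\in G}R_g\phi$, which is $0$. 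Thus $(q_1^s,q_3^s)$ is a weak solution of \eqref{eq:d4}; elliptic regularity (via the a priori $L^\infty$-bound standard for Landau-de Gennes systems, then bootstrapping) promotes it to a classical solution. By the reduction recalled above, $\Qvec:=q_1^sE_1+q_3^sE_3$ solves the full system \eqref{eq:6} and is therefore a Landau-de Gennes critical point of the form \eqref{eq:d1}; finally, $(q_1^s,q_3^s)\in\mathscr{A}^G$ gives $q_1^s(0,y)=-q_1^s(0,y)$ and $q_1^s(x,0)=-q_1^s(x,0)$, i.e. $q_1^s\equiv 0$ on $\{x=0\}\cup\{y=0\}$, which is the WORS property.

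The main obstacle is the symmetry bookkeeping underlying the first step: one must verify edge by edge that $\mu$ and $\tau$ identify the boundary segments compatibly with the tangent data on $C_1,\dots,C_4$ and with the interpolating data on $S_1,\dots,S_4$, with exactly the signs that render $q_{1,\mathrm{b}}$ odd; and one must check (by the computation already used to pass from \eqref{eq:6} to \eqref{eq:d4}) that the two-dimensional subspace $\mathrm{span}\{E_1,E_3\}$ is invariant under the nonlinear right-hand side of \eqref{eq:6}, so that a solution of the two-equation system \eqref{eq:d4} genuinely solves the five-equation system \eqref{eq:6}. The remaining ingredients — the direct method and the symmetric-criticality argument — are routine.
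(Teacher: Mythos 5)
Your argument is, at bottom, the same as the paper's, packaged differently: the paper minimises $J_\lambda$ on a single quadrant with the artificial conditions $q_1=0$ and $\partial q_3/\partial\mathbf{n}=0$ on the coordinate axes and then extends by odd/even reflection, which is exactly the fundamental-domain formulation of your minimisation over the fixed-point class $\mathscr{A}^G$ followed by symmetric criticality. The Neumann condition the paper imposes on $q_3$ is the natural boundary condition that your even symmetry produces for free, and the paper's unspecified verification that the reflected pair is a critical point on all of $\Omega$ is precisely your averaging identity. Your version has the advantage of never leaving the full domain, so you avoid discussing the $W^{1,2}$-regularity of the reflected functions across the axes; the paper's version has the advantage of not needing the boundary data to be exactly equivariant on the artificial interfaces, since those are interior to $\Omega$.

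One concrete point on the ``symmetry bookkeeping'' you rightly flag as the main thing to check: with the data exactly as written in \eqref{eq:d2}, $q_{1,\mathrm{b}}$ equals $g(y)$ on \emph{both} $S_1$ and $S_3$ and $g(x)$ on \emph{both} $S_2$ and $S_4$, so it is even, not odd, under $\mu$ on $S_1\cup S_3$ and under $\tau$ on $S_2\cup S_4$; taken literally this would make $\mathscr{A}^G$ empty (the trace on $S_1$ would have to satisfy $g(y)=-g(y)$). The same literal formula also makes $q_{1,\mathrm{b}}$ discontinuous where $S_3$ meets $C_2$ and where $S_4$ meets $C_3$, since $g(\pm\varepsilon)=\pm s_+/2$ while the adjacent long-edge values there are $\mp s_+/2$. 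Continuity therefore forces an extra minus sign in the data on $S_3$ and $S_4$ (i.e.\ $-g(y)$ and $-g(x)$ respectively), and with that correction --- which the paper's own reflection construction implicitly requires as well --- $q_{1,\mathrm{b}}$ is genuinely odd under both $\mu$ and $\tau$, $\mathscr{A}^G$ is non-empty, and your proof goes through.
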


\begin{proof}
 We follow the ideas in \cite{majcanevarispicer2017} and minimize  the functional $J[q_1, \, q_3 ]$ on a quadrant of the rotated rescaled square with an isotropic inclusion, as defined in (\ref{eq:domain}).
For the minimization problem on the quadrant, we need additional boundary conditions on the square diagonals. We impose the additional boundary condition that $q_1 = 0$ on the square diagonals $x = 0$ and $y = 0$. Further, we impose $\frac{\partial q_3}{\partial \mathbf{n}} = 0$ on $x = 0$ and $y = 0$, where $\mathbf{n}$ is the unit normal to the diagonals. We can prove the existence of a minimizer $\left(q_1^*, q_3^* \right)$ of $J[q_1, \, q_3 ]$ on the quadrant in $W^{1,2}$, subject to these boundary conditions, from the direct method in the calculus of variations \cite{evans}. We define $q_1^s$ on the square by an odd reflection of $q_1^*$ about the square diagonals and $q_3^s$ by an even reflection of $q_3^*$ about the square diagonals. By using the same arguments as in \cite{majcanevarispicer2017}, we can check that $\left(q_1^s, q_3^s \right)$ is a critical point of $J[q_1, \, q_3 ]$ on the square with an isotropic inclusion, with the property $q_1 =0$ on $x = 0$ and $y=0$. We label this as the ``Well Order Reconstruction Solution''.
\end{proof}

We define the WORS as being a LdG critical point given by
\begin{equation}
\label{eq:wors}
 \begin{split}
\Qvec_s = q_1^s(x, \, y) \left(\nvec_1 \otimes \nvec_1 - \nvec_2\otimes \nvec_2 \right) +
  q_3^s(x, \, y) \left(2 \hat{\mathbf{z}}\otimes\hat{\mathbf{z}} - \nvec_1\otimes \nvec_1 - \nvec_2\otimes \nvec_2 \right)
 \end{split}
\end{equation}
where the pair $(q_1^s, q_3^s)$ is defined above in Proposition~(\ref{prop:1}). 
There is an important distinction between the WORS for a square domain with and without an isotropic inclusion. In \cite{majcanevarispicer2017}, the authors study the WORS on a square domain without an isotropic inclusion and hence, only have the tangent uniaxial Dirichlet conditions (\ref{eq:bc1}) on the outer square edges in which case, we can have a WORS solution with constant $q_3$ at a special temperature defined by $A=-\frac{B^2}{3C}$. In this case, the WORS analysis reduces to a scalar variational problem as studied in \cite{majcanevarispicer2017}. In the case of a square with an isotropic inclusion, the boundary conditions for $q_3$ on the inner and outer square do not match and hence, we have inhomogeneous profiles for both $q_1$ and $q_3$ for all values of $A$, making this a harder problem. Next, we have a uniqueness result following the same arguments as in \cite{majcanevarispicer2017} and \cite{lamy2014}.

\begin{proposition} \label{prop:2}
The WORS defined in (\ref{eq:wors}) is the unique LdG critical point (and hence, globally stable) for either $\lambda$ sufficiently small or for $\rho$ sufficiently close to $1$ i.e. for either very small squares or for squares with inclusions with the aspect ratio approaching unity.
\end{proposition}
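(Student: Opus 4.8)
The plan is to adapt the small-domain uniqueness argument of \cite{lamy2014, majcanevarispicer2017}: in both regimes the Dirichlet part of $\overline{I}$ dominates its bulk part, so uniqueness of critical points --- and hence global stability of the WORS --- follows from a Poincar\'e inequality. The first step is a uniform a priori bound: every critical point $\Qvec\in\mathscr{A}$ of $\overline{I}$ satisfies $\|\Qvec\|_{L^\infty(\Omega)} \le M_0 := s_+\sqrt{2/3}$, a constant depending only on $A, B, C$, not on $\lambda$ or $\rho$. This is the standard maximum-principle bound for the system \eqref{eq:6} (see, e.g., \cite{ejam2010}), and it applies here because the prescribed datum obeys $|\Qvec_{\mathrm b}|^2 \le \tfrac23 s_+^2$ pointwise on $\partial\Omega$, as one checks directly from \eqref{eq:bc0}--\eqref{eq:bc2} using $|g|\le s_+/2$ (with equality on the long edges $C_i$). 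Consequently the map $\mathcal{B}(\Qvec) := A\Qvec - B(\Qvec\Qvec - \tfrac{\Ivec}{3}|\Qvec|^2) + C|\Qvec|^2\Qvec$ on the right-hand side of \eqref{eq:6} is Lipschitz on the ball $\{|\Qvec|\le M_0\}$, with a Lipschitz constant $K_0 = K_0(M_0, A, B, C)$.

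Next, given two critical points $\Qvec_1, \Qvec_2\in\mathscr{A}$, I would set $W := \Qvec_1 - \Qvec_2\in W^{1,2}_0(\Omega, S_0)$ (the boundary values cancel) and test the difference of the weak formulations of \eqref{eq:6} against $W$, obtaining
\[
\begin{aligned}
 \int_\Omega |\nabla W|^2\,\d A
 &= -\frac{\lambda^2}{L}\int_\Omega \big(\mathcal{B}(\Qvec_1) - \mathcal{B}(\Qvec_2)\big):W\,\d A\\
 &\le \frac{\lambda^2 K_0}{L}\int_\Omega |W|^2\,\d A
 \;\le\; \frac{\lambda^2 K_0\, C_P(\Omega)}{L}\int_\Omega |\nabla W|^2\,\d A,
\end{aligned}
\]
where $C_P(\Omega)$ is the optimal constant in the Poincar\'e inequality $\|W\|_{L^2(\Omega)}^2 \le C_P(\Omega)\,\|\nabla W\|_{L^2(\Omega)}^2$ for $W\in W^{1,2}_0(\Omega)$. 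Hence $W\equiv 0$, i.e. $\overline{I}$ has a unique critical point in $\mathscr{A}$, as soon as $\lambda^2 K_0\, C_P(\Omega) < L$. Since $\overline{I}$ attains its minimum on $\mathscr{A}$ by the direct method and the WORS of Proposition~\ref{prop:1} is a critical point, under this condition the WORS \emph{is} that minimizer, hence globally stable.

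It remains to exhibit the two regimes. For fixed $\rho$ (hence fixed $\Omega$ and fixed $C_P(\Omega)$), the inequality $\lambda^2 K_0\, C_P(\Omega) < L$ holds for all $\lambda$ below an explicit threshold, which is the small-square case. For the narrow-annulus case I would note that every $(x,y)\in\Omega$ satisfies $|x|+|y| = \max(|x+y|, |x-y|) < 1$ and $\big||x| - |y|\big| = \min(|x+y|, |x-y|) > \rho$, whence $2\min(|x|, |y|) < 1-\rho$; thus $\Omega\subseteq \{|x| < \tfrac{1-\rho}{2}\}\cup\{|y| < \tfrac{1-\rho}{2}\}$, a union of two slabs of width $1-\rho$. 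Extending $W\in W^{1,2}_0(\Omega)$ by zero and applying the one-dimensional Friedrichs inequality across each slab gives $C_P(\Omega) \le (1-\rho)^2/\pi^2 \to 0$ as $\rho\to1$, so $\lambda^2 K_0\, C_P(\Omega) < L$ once $\rho$ is sufficiently close to $1$.

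The computations in each step are routine; the point demanding the most care --- and the one on which the whole scheme rests --- is the uniform $L^\infty$ bound, since it is what renders the Lipschitz constant $K_0$ independent of $\lambda$ and $\rho$ and therefore lets the Poincar\'e term control the bulk term. The geometric estimate $C_P(\Omega) = O((1-\rho)^2)$ is the only other nontrivial ingredient, and it is elementary given the slab inclusion above. I expect the write-up to be short, essentially a transcription of the arguments in \cite{lamy2014, majcanevarispicer2017} to the present domain.
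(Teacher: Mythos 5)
Your overall scheme --- a maximum-principle bound $|\Qvec|\le M_0$ depending only on $A$, $B$, $C$, the resulting Lipschitz constant for the bulk nonlinearity, and a Poincar\'e inequality with constant $O((1-\rho)^2)$ forcing uniqueness once $\lambda^2 K_0\, C_P(\Omega)<L$ --- is exactly the paper's strategy, and your explicit ``test the difference of two critical points against itself'' step is a perfectly acceptable substitute for the paper's citation of \cite[Lemma~8.2]{lamy2014}; the small-$\lambda$ regime is handled identically in both arguments.

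The gap is in the narrow-annulus Poincar\'e estimate. Your two-slab containment rests on the claim that $\min(|x+y|,|x-y|)=\bigl||x|-|y|\bigr|>\rho$ throughout $\Omega$, i.e.\ on reading \eqref{eq:domain} as imposing \emph{both} lower bounds simultaneously. That literal reading disconnects $\Omega$ into four components (in the coordinates $u=x+y$, $v=x-y$ it becomes the product of two disconnected unions of intervals) and is incompatible with the paper's own statement that $\partial\Omega$ has exactly two components, with $\Gamma_{\mathrm{in}}$ the single square $|x|+|y|=\rho$: the intended domain is the truncated square annulus $\{\rho<|x|+|y|<1\}$. On that annulus the point $(1/2,\,1/2-\delta)$ belongs to $\Omega$ whenever $\delta<1-\rho$, yet $2\min(|x|,|y|)=1-2\delta\geq\rho$, so the inclusion of $\Omega$ in two slabs of width $1-\rho$ is false and your bound $C_P(\Omega)\le(1-\rho)^2/\pi^2$ does not follow. (A secondary point: even granting the containment, Friedrichs ``across each slab'' cannot be applied directly to the zero-extension of $W$, which need not vanish on the slab boundaries; one must argue slice-by-slice inside each component of $\Omega$.) The paper obtains $C_P(\Omega)\le(1-\rho)^2$ instead by parametrising each quadrant sector as $x=ts$, $y=t(1-s)$ with $t=x+y\in(\rho,1)$ and applying the fundamental theorem of calculus in $t$ towards the outer boundary --- every point of the annulus reaches the outer square along a ray through the origin within a parameter distance $1-t\le 1-\rho$. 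Substituting that computation for your slab argument repairs the proof; everything else in your proposal stands.
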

\begin{proof}
The proof follows by the arguments in Proposition~$4.2$ of \cite{lamy2014},
provided that we are able to bound the Poincar\'e constant of $\Omega$
in terms of the geometric parameter~$\rho$.
Let $u\in H^1(\Omega)$ be any scalar function such that~$u = 0$ on~$\partial\Omega$;
we extend $u$ out of~$\Omega$ by zero. We consider the set
$K : = \{(x, y)\in\Omega\colon x\geq 0, \, y\geq 0, \, \rho < x + y < 1\}$
and define new variables $(s, \, t)$ by 
\[
 x = ts, \qquad y = t(1-s)
\]
for each $(x, y)\in K$. The variables $(s, t)$ vary in the range $s\in (0, \, 1)$,
$t\in (\rho, \, 1)$.
We compute the integral of $|u|^2$ over~$K$ with respect to the coordinates
$(s, t)$, and apply the fundamental theorem of calculus in
$t$-direction, using that $u=0$ 
for $t = 1$:
\begin{align*}
  \int_K |u(\xvec)|^2 \, \d\xvec &= 
  \int_0^{1}\int_{\rho}^{1} t |u(s, t)|^2 \,\d t\,\d s
  \leq \int_0^{1}\int_{\rho}^{1} t \abs{\int_{t}^1 \partial_\xi
  u(s, \xi) \, \d\xi}^2 \,\d t\,\d s \\
  &\leq \int_0^{1}\int_{\rho}^{1}\int_{t}^{1} 
  t(1 - t) \abs{\partial_\xi u(s, \xi)}^2 \, \d\xi \,\d t\,\d s
\end{align*}
The last inequality follows by the H\"older inequality.
Now, we have $|\partial_t u|^2 = |s\partial_x u + (1-s)\partial_yu|^2
\leq s |\partial_x u|^2 + (1-s) |\partial_y u|^2 \leq |\nabla u|^2$,
where $\nabla$ denotes the gradient with respect to~$(x, y)$.
Using this with $\rho \leq t \leq \xi$, 
and reverting to the original coordinates $(x, \, y)$, we obtain
\begin{equation*} 
 \int_K |u(\xvec)|^2 \d\xvec \leq  
 (1-\rho)^2 \int_0^{1}\int_{\rho}^{1}
 \xi \abs{\partial_t u(s, \xi)}^2 \, \d\xi\,\d s
 \leq (1-\rho)^2 \int_K |\nabla u(\xvec)|^2 \d\xvec. 
\end{equation*}

By repeating the same argument on the other quadrants, and by adding the resulting inequalities, we conclude that
\begin{equation} \label{poincare}
 \int_{\Omega} |u(\xvec)|^2 \,\d\xvec \leq (1-\rho)^2 \int_{\Omega} |\nabla u(\xvec)|^2 \,\d\xvec. 
\end{equation}

By an application of the maximum principle, as in~\cite[Proposition~3]{amaz}, 
we know that any solution~$\Qvec$ of~\eqref{eq:6} in the admissible class~\eqref{eq:admissible} is bounded, i.e.
$|\Qvec(\xvec)|\leq M$ for any~$\xvec\in\Omega$ and a constant~$M$ 
that only depends on the coefficients~$A$, $B$, $C$. Now, by repeating
verbatim the arguments in \cite[Lemma 8.2]{lamy2014}, and using 
the Poincar\'e inequality~\eqref{poincare}, we conclude that the boundary
value problem~\eqref{eq:6}, \eqref{eq:bc0}, \eqref{eq:bc1},
\eqref{eq:bc2} has a unique solution, provided that
\[
 (1-\rho)^2 \lambda^2 < \kappa L
\]
for some positive constant $\kappa$ that only depends on~$M$, $A$, $B$, $C$.
\end{proof}

\section{The limit of large domains}
\label{sec:gamma}

In the following proposition, we analyze the asymptotic behavior
of minimizers of~\eqref{eq:J} in the limit as~$\lambda\to+\infty$.
To this end, we need to introduce some notation. 
We denote~$\qvec := (q_1, \, q_3)$ and define a metric~$d$ on the 
$\qvec$-plane in the following way: for any two points~$\qvec_0$,
$\qvec_1\in\Rr^2$, we let
\begin{equation} \label{dist}
 d(\qvec_0, \, \qvec_1) :=\inf\left\{\int_0^1 F^{1/2}(\qvec(t))
 |\qvec^\prime(t)|\,\d t\colon \qvec\in C^1([0, \, 1]; \, \Rr^2),
 \ \qvec(0) = \qvec_0, \ \qvec(1) = \qvec_1 \right\} \! .
\end{equation}
This is the geodesic distance associated with the Riemannian 
metric~$F^{1/2}$. However, this metric is degenerate, in that
$F^{1/2}(\pvec_1) = F^{1/2}(\pvec_2) = F^{1/2}(\pvec_3) = 0$ for
$\pvec_1$, $\pvec_2$, $\pvec_3$ given by~\eqref{critical_points}.
Despite the degeneracy, it can be proved that the infimum in~\eqref{dist} 
is actually achieved by a minimizing geodesic, for any~$\qvec_0$,
$\qvec_1\in\Rr^2$ (this follows by the arguments in 
\cite[Lemma~9]{Sternberg}). 

Let~$\mathcal{H}^1(E)$ denote the length of a set~$E\subseteq\Rr^2$
(or, more formally, its $1$-dimensional Hausdorff measure).
For every measurable subset~$E\subseteq\Omega$,
we denote by $\chi_E$ the characteristic function of~$E$ 
(i.e., $\chi_E(x) :=1$ for~$x\in E$, and~$\chi_E(x) := 0$ otherwise)
and by $\partial^* E$ the reduced boundary of~$E$, that is, the set 
of points~$x\in\partial E$ such that the limit
\[
 \nu_E(x) := \lim_{\rho\searrow 0}\frac{\D\chi_E(B_\rho(x))}{|\D\chi_E|(B_\rho(x))}
\]
exists and~$|\nu_E(x)|=1$. (Here $\D\chi_E$ stands for the 
distributional derivative of~$\chi_E$, which is a measure, and 
$|\D\chi_E|$ is the total variation measure; see, e.g., the book~\cite{evans}
for a detailed discussion on the distributional derivative.)
The reduced boundary is a subset $\partial^* E \subseteq\partial E$
with the following property:
\[
 \mathcal{H}^1(\partial^* E\cap\Omega) = \sup\left\{
 \int_E \mathrm{div} \, \varphi \,\d A \colon
 \varphi\in C^1_{\mathrm{c}}(\Omega), 
 \ |\varphi| \leq 1 \textrm{ on } \Omega\right\} \! .
\]
(see, e.g., \cite[Section~14]{Simon-GMT}).
If~$E$ has a regular (say, piecewise~$C^1$) boundary, 
by the Gauss-Green theorem the right-hand side of this formula reduces 
to~$\mathcal{H}^1(\partial E\cap\Omega)$, and 
indeed~$\partial^* E = \partial E$ in this case; however,
for a generic set~$E$ with non-regular boundary, we might 
have~$\partial^* E \subsetneq \partial E$.

Finally, we set~$\qvec_{\mathrm{b}} := (q_{1,\mathrm{b}}, \, q_{3,\mathrm{b}})$,
where~$q_{1,\mathrm{b}}$, $q_{3,\mathrm{b}}$ are defined by
\eqref{eq:d2}, \eqref{eq:d3} respectively. 
We let~$\qvec_\lambda := (q_1, \, q_3)$ be a minimizer of the 
functional~\eqref{eq:J}, for~$\lambda >0$.

\begin{proposition} \label{prop:Gamma}
 There exists a subsequence~$\lambda_j\nearrow +\infty$ such that
 $\qvec_{\lambda_j}$ converges, in~$L^1(\Omega)$ and a.e., to a map of
 the form
 \[
  \qvec_\infty = \sum_{k=1}^3 \pvec_k \, \chi_{E^*_k}.
 \]
 Here~$\pvec_1$, $\pvec_2$, $\pvec_3$ are defined 
 by~\eqref{critical_points}, and~$E^*_1$, $E^*_2$, $E^*_3$ are
 measurable, pairwise disjoint sets such that
 $\Omega = E^*_1\cup E^*_2 \cup E^*_3$. Moreover,
 $E^*_1$, $E^*_2$, $E^*_3$ minimize the following functional:
 \begin{equation} \label{opt_partition}
  J_\infty[E_1, \, E_2, \, E_3] := 
  \sum_{i, j=1}^3 d(\pvec_i, \, \pvec_j) \,
  \mathcal{H}^1(\partial^* E_i\cap \partial^* E_j\cap\Omega) 
  + \int_{\partial\Omega} d(\qvec_\infty(\rvec), \, \qvec_{\mathrm{b}}(\rvec))
  \, \d\mathcal{H}^1(\rvec)
 \end{equation}
 among all possible choices of measurable, pairwise disjoint sets $E_1$,
 $E_2$, $E_3$ such that $\Omega = E_1\cup E_2\cup E_3$.
\end{proposition}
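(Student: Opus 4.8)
The plan is to recognize the rescaled functionals $\mathcal{J}_\lambda := (c/\lambda)\,J_\lambda$ (with $c$ the dimensional constant that makes a transition layer of width $\sim\lambda^{-1}$ carry bounded energy) as a vectorial Modica--Mortola functional whose potential $F$ vanishes precisely on the three wells $\pvec_1$, $\pvec_2$, $\pvec_3$, and to show that $\mathcal{J}_\lambda$ $\Gamma$-converges, as $\lambda\to+\infty$, to the partition functional $J_\infty$ of~\eqref{opt_partition}; the convergence of minimizers asserted in the proposition is then the standard consequence of $\Gamma$-convergence together with a compactness bound. It is convenient to first substitute $\tilde q_3 := \sqrt{3}\,q_3$, which turns the Dirichlet density $|\nabla q_1|^2 + 3|\nabla q_3|^2$ into the isotropic $|\nabla(q_1,\tilde q_3)|^2$ and makes $d$ of~\eqref{dist} the corresponding degenerate geodesic distance, for which minimizing geodesics between the wells exist by~\cite[Lemma~9]{Sternberg}. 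The overall scheme is that of~\cite{Sternberg} and its vectorial descendants; the feature specific to our problem is the Dirichlet condition on $\partial\Omega$ with data $\qvec_{\mathrm{b}}$ that need not lie in $\{\pvec_1,\pvec_2,\pvec_3\}$ --- in particular $\qvec_{\mathrm{b}}=0$ on $\Gamma_{\mathrm{in}}$, which is the local maximum of $F$ --- and this is exactly what produces the boundary integral in~\eqref{opt_partition}.

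For compactness I would bound $\mathcal{J}_\lambda[\qvec_\lambda]$ uniformly by testing minimality against a fixed competitor in $\mathscr{A}$; combined with the $L^\infty$ bound $|\qvec_\lambda|\le M$ coming from the maximum principle (\cite[Proposition~3]{amaz}, already used in the proof of Proposition~\ref{prop:2}), this forces $\int_\Omega F(\qvec_\lambda)\to 0$, hence subsequential a.e.\ convergence $\qvec_\lambda\to\qvec_\infty$ with $\qvec_\infty\in\{\pvec_1,\pvec_2,\pvec_3\}$ a.e.; set $E^*_k := \{\qvec_\infty = \pvec_k\}$. Choosing a Lipschitz $\Phi\colon\Rr^2\to\Rr$ with $|\nabla\Phi|\le 2F^{1/2}$ and $\Phi(\pvec_i)\ne\Phi(\pvec_j)$ for $i\ne j$, the chain rule and Young's inequality give $\int_\Omega|\nabla(\Phi\circ\qvec_\lambda)|\le\mathcal{J}_\lambda[\qvec_\lambda]\le C$, so $\Phi\circ\qvec_\lambda$ is bounded in $BV(\Omega)$; passing to the limit identifies $\sum_k\Phi(\pvec_k)\chi_{E^*_k}$ as a $BV$ function, so each $E^*_k$ has finite perimeter and the convergence upgrades to $L^1(\Omega)$. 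For the $\Gamma$-liminf inequality I would localize the energy as a measure and slice: at $\mathcal{H}^1$-a.e.\ point of $\partial^* E^*_i\cap\partial^* E^*_j\cap\Omega$, one-dimensional slicing in the direction $\nu$ bounds the rescaled slice energy below by $2\,d(\pvec_i,\pvec_j)$, matching the ordered-pair double sum in~\eqref{opt_partition}, while on inward half-line slices from $\partial\Omega$ the trace $\qvec_{\mathrm{b}}$ is pinned and the interior value tends to $\qvec_\infty$, forcing a one-sided transition costing at least $d(\qvec_\infty(\rvec),\qvec_{\mathrm{b}}(\rvec))$ per unit boundary length; integrating and summing reproduces $J_\infty$. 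The finitely many corners of $\partial\Omega$ and the junction set of the $E^*_k$ carry no $\mathcal{H}^1$-mass and are excised.

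For the $\Gamma$-limsup inequality, given any admissible partition $(E_1,E_2,E_3)$ with $J_\infty<\infty$, I would build a recovery sequence in $\mathscr{A}$ by polyhedral approximation of the reduced boundaries, insertion across each interface of an optimal one-dimensional profile (a minimizing geodesic for $d$, reparametrized to equipartition gradient and potential energy) on a layer of width $O(\lambda^{-1})$, and insertion near $\partial\Omega$ of an analogous boundary layer of width $O(\lambda^{-1})$ realizing an optimal path from $\qvec_\infty$ to the prescribed $\qvec_{\mathrm{b}}$; a diagonal argument then gives $\mathcal{J}_{\lambda_j}[\qvec_{\lambda_j}]\to J_\infty[E_1,E_2,E_3]$. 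Since $J_\infty$ is lower semicontinuous and coercive with respect to $L^1$ convergence of characteristic functions (lower semicontinuity of perimeter), it attains its minimum; combining the liminf inequality applied to the minimizers $\qvec_{\lambda_j}$ with the limsup inequality applied to a minimizing partition shows that the limit $\qvec_\infty=\sum_k\pvec_k\chi_{E^*_k}$ is itself a minimizer of $J_\infty$, which is the assertion.

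The step I expect to be the main obstacle is the boundary term: both halves of the $\Gamma$-inequality must be established \emph{up to $\partial\Omega$}, with non-constant Dirichlet data (the function $g$ on the short edges $S_i$, the value $0$ on $\Gamma_{\mathrm{in}}$) and across the several corners of $\Omega$, and all of this for the \emph{degenerate} metric $F^{1/2}$ that vanishes at the wells --- so the usual reflection and extension constructions for boundary layers, and the slicing lower bound near the boundary, have to be merged with Sternberg's device for handling the degeneracy. The purely interior estimates, by contrast, are routine adaptations of the vectorial Modica--Mortola theory.
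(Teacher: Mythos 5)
Your outline is, in substance, the same proof the paper relies on: the paper's own ``proof'' of Proposition~\ref{prop:Gamma} consists of a citation to the multi-well Modica--Mortola literature (the main result of \cite{Baldo}, cf.\ \cite{FonsecaTartar,Braides}) together with the remark that Dirichlet data are incorporated by the standard modification indicated in \cite[Section~4.2.1 and Theorem~7.10]{Braides}. What you have written out --- the substitution $\tilde q_3=\sqrt3\,q_3$ to isotropize the Dirichlet term, compactness via a truncated geodesic-distance function composed with $\qvec_\lambda$ and bounded in $BV$, the slicing lower bound with cost governed by the degenerate metric $d$ of \eqref{dist} (existence of geodesics via \cite[Lemma~9]{Sternberg}), and the recovery sequence built from equipartitioned geodesic profiles across polyhedral interfaces plus boundary layers realizing $d(\qvec_\infty,\qvec_{\mathrm b})$ --- is precisely the content of those references, and you correctly identify the boundary term, with data not valued in the wells, as the only ingredient that is not off the shelf. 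So the route is the same; you have simply unpacked the citation.

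One step, as literally written, would fail: the uniform bound $\mathcal{J}_\lambda[\qvec_\lambda]\le C$ cannot be obtained by ``testing minimality against a fixed competitor in $\mathscr{A}$''. For a fixed $\qvec_0\in\mathscr{A}$ one gets $\lambda^{-1}J_\lambda[\qvec_0]=\lambda^{-1}\int_\Omega(|\nabla q_{1,0}|^2+3|\nabla q_{3,0}|^2)\,\d A+(\lambda/L)\int_\Omega F(\qvec_0)\,\d A$, whose second term is $O(\lambda)$ unless $F(\qvec_0)=0$ a.e.; but a $W^{1,2}$ map valued a.e.\ in the finite set $\{\pvec_1,\pvec_2,\pvec_3\}$ is a.e.\ constant on the connected domain $\Omega$ and cannot match $\qvec_{\mathrm b}$, which vanishes on $\Gamma_{\mathrm{in}}$. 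The competitor must itself be $\lambda$-dependent --- e.g.\ the recovery sequence of any one fixed admissible partition, which your limsup construction already supplies --- and only then is the rescaled energy $O(1)$. A second, cosmetic point: your slicing argument correctly yields $2d(\pvec_i,\pvec_j)$ per unit interface length and, by the same one-sided computation, $2d(\qvec_\infty(\rvec),\qvec_{\mathrm b}(\rvec))$ per unit length of $\partial\Omega$; so if the interior term of \eqref{opt_partition} is read as an ordered double sum, the boundary integral should carry the same factor of $2$. The paper's subsequent explicit formulas for $J_\infty(\mathrm{WORS})$ etc.\ use the coefficient $d$ uniformly for both terms, which is the internally consistent normalization and, being a global constant multiple, does not affect which partition minimizes.
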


The sets~$E^*_1$, $E^*_2$, $E^*_3$ give a partition of the domain $\Omega$,
and they are optimal, in the sense that they minimise 
the functional~\eqref{opt_partition}. This functional
depends on the length of the transition
layers~$\partial^* E_i\cap\partial^* E_j$ and on~$d(\pvec_i, \, \pvec_j)$, 
which represents the energy cost of a transition from the 
state~$\pvec_i$ to~$\pvec_j$. The functional~\eqref{eq:J} also 
contains a boundary term, which accounts for the possible presence
of boundary layers. 


\begin{proof}[Proof of Proposition~\ref{prop:Gamma}]
 This result can be shown using classical arguments in the theory of $\Gamma$-convergence.
 More precisely, Proposition~\ref{prop:Gamma} follows by the main result in~\cite{Baldo}
 (see also~\cite[Theorem~3.9]{FonsecaTartar} or~\cite[Theorem~7.20]{Braides}
 for similar results). The analysis in~\cite{Baldo, FonsecaTartar} does not take into account
 the presence of boundary conditions, such as~\eqref{eq:d2}--\eqref{eq:d3}. 
 However, these can be included by straightforward modifications of the arguments,
 as indicated in~\cite[Section~4.2.1 and Theorem~7.10]{Braides}.
\end{proof}

Let us introduce the transition costs
\begin{equation}\label{costs}
\begin{array}{l l}
c_1 := d(\mathbf{o}, \, \pvec_3), & \qquad c_2 := d(\mathbf{o}, \, \pvec_1) = d(\mathbf{o}, \, \pvec_2), \\
c_3 := d(\pvec_1, \, \pvec_3) = d(\pvec_2, \, \pvec_3), & \qquad c_4 := d(\pvec_1, \pvec_2), \\
\end{array}
\end{equation}
where $\mathbf{o} :=(0, \, 0)$ is the origin in the $(q_1, \, q_3)$-plane
and~$d$ is the intrinsic distance defined by~\eqref{dist}. 
These costs $c_1$, $c_2$, $c_3$, $c_4$ are functions of~$A$, $B$, $C$.
We have used the symmetry of the function~$F$, given by~\eqref{F}, to deduce that 
$d(\mathbf{o}, \, \pvec_1) = d(\mathbf{o}, \, \pvec_2)$ and 
$d(\pvec_3, \, \pvec_1) = d(\pvec_3, \, \pvec_2)$. 
By analyzing the possible configurations of~$(E_1, \, E_2, \, E_3)$, 
we can identify three candidate minimizers for ~\eqref{opt_partition}
and compute their energy as a function of the transition costs~\eqref{costs}.
For the sake of simplicity, in what follows we assume that~$\varepsilon =0$, i.e.
no truncation of the domain~$\Omega$ has been made. This is acceptable because,
when~$\varepsilon$ is small, the contribution of the truncated edges to
the boundary integral in~\eqref{opt_partition} is negligible.

\begin{itemize}
 \item A configuration with $\qvec = \pvec_1$ on~the first and third quadrant, and~$\qvec = \pvec_2$
 on the second and fourth quadrant (i.e., $E_1 = \{(x, \, y)\in\Omega\colon xy\geq 0\}$,
 $E_2 = \{(x, \, y)\in\Omega\colon xy< 0\}$, $E_3 =\emptyset$).
 This configuration corresponds to \emph{the $\lambda\to+\infty$ limit of the WORS}. 
 It has transition layers from the isotropic state to~$\pvec_1$ or~$\pvec_2$
 over the whole of the inner boundary~$\Gamma_1$, 
 and transition layers~$\pvec_1\to\pvec_2$ on the diagonals.
 Using (\ref{opt_partition}), the energy of this WORS-like configuration in the $\lambda\to+\infty$ limit is given by 
 \[
  J_\infty(\mathrm{WORS}) = 4\sqrt{2} \rho c_2 + 
     4\left(1 - \rho \right)c_4.
 \]
 
 \item Two configurations related by symmetry, with $\qvec=\pvec_1$,
 respectively $\qvec=\pvec_2$ over almost the entire domain ~$\Omega$. Equivalently, in terms of the~$E_k$'s,
 these configurations are given by $E_1=\Omega$, $E_2=E_3=\emptyset$
 and $E_2=\Omega$, $E_1=E_3=\emptyset$, respectively. These configuration have a transition layer 
 at the inner boundary, from the isotropic to a uniaxial state (either $\mathbf{p}_1$ or $\mathbf{p}_2$), and two 
 \emph{boundary transition layers} on the edges~$C_2$, $C_4$ respectively (or $C_1$, $C_3$),
 to account for the boundary conditions~\eqref{eq:d2}--\eqref{eq:d3}. We refer to these states as \emph{BD} states, to abbreviate for boundary distortion, since they have two distinctive edge transition layers along a pair of parallel outer square edges.
 These two BD configurations have the same energy given by
 \[
  J_\infty(\mathrm{BD}) = 4\sqrt{2}\, \rho c_2 + 2\sqrt{2}c_4.
 \]
 
 \item A configuration with~$\qvec=\pvec_3$ in a neighbourhood of the inner boundary,
 surrounded by the same cross structure as in the WORS, that is,
 \begin{align*}
   E_3 &= \left\{(x, \, y)\in\Omega\colon |x + y|\leq  \rho + \eta, \
      |x - y|\leq  \rho  + \eta \right\} \! , \\
   E_1 &= \left\{(x, \, y)\in\Omega\setminus E_3\colon xy\geq 0 \right\} \! , \\
   E_2 &= \left\{(x, \, y)\in\Omega\setminus E_3\colon xy< 0 \right\} \! .
 \end{align*}
 In this \emph{``escaped''} configuration, the isotropic core is surrounded by a uniaxial region,
 $E_3$, with positive order parameter or equivalently $q_3>0$. This may be energetically convenient, if
 the transition $\pvec_1\to\pvec_2$ is energetically very expensive,
 compared to the transitions $\mathbf{o}\to\pvec_3$ and 
 $\pvec_3\to\pvec_1$, $\pvec_3\to\pvec_2$. If this is the case, the escaped configuration
 reduces the length of the (very expensive) transition layer along the diagonals, 
 at the price of introducing a new transition layer near the core.
 The overall cost of this configuration is given by
 \[
  J_\infty(\mathrm{ESC}) = 4\sqrt{2} \rho c_1 +
     4\sqrt{2}\left( \rho + \eta\right) c_3 + 
     4\left(1 - \rho - \eta\right)c_4.
 \]
\end{itemize}

A configuration that has an island of the state $\mathbf{p}_3$ around the core,
surrounded by a constant state~$\pvec_1$ or~$\pvec_2$, 
 always has greater energy than the competing BD-configuration. This follows
from the triangle inequality for the metric~$d$, which gives~$c_1 + c_3 \geq c_2$. 
Therefore, we will not consider this configuration here. Other configurations, that have 
``two-steps transition layers'' e.g. a transition of the form $\pvec_1\to\pvec_3\to\pvec_2$
occurring along the diagonals, can be ruled out for the same reason.
Configurations with non-straight transition layers can also be ruled out,
as the energy per transition layer is proportional to the length of the transition layer
and a non-straight transition layer between two points has greater length than a straight layer.

Now, we can compare the energy costs of these configurations.
\begin{itemize}
 \item $J_\infty(\mathrm{WORS})<J_\infty(\mathrm{BD})$ if and only if $\rho > 1 - \sqrt{2}/2$.
 
 \item We compare $J_\infty(\mathrm{ESC})$ with $J_\infty(\mathrm{WORS})$. 
 By substituting the explicit expressions for the two energies, we see that the inequality
 $J_\infty(\mathrm{ESC})<J_\infty(\mathrm{WORS})$ is equivalent to
 \[
   (\sqrt{2}c_3 - c_4)\eta < \sqrt{2}(c_2 - c_1 - c_3) \rho
 \]
 and the right-hand side is always non-positive, due to the triangle inequality.
 Thus, for this inequality to be satisfied we must have $c_4 > \sqrt{2}c_3$.
 By imposing the geometric constraint that $\eta \leq 1 - \rho$, 
 we obtain 
 \[
  \frac{\sqrt{2}(c_1 - c_2 + c_3)}{(c_4 - \sqrt{2}c_3)} \rho
  \leq 1 - \rho.
 \]
 By straightforward algebraic manipulations, we conclude that
 the inequality $J_\infty(\mathrm{ESC})<J_\infty(\mathrm{WORS})$ 
 holds if and only if
 \begin{equation*}
  \begin{cases}
   c_4 > \sqrt{2} c_3 \\
   0 < \rho < R_1
  \end{cases}
  \quad \textrm{or} \quad
  \begin{cases}
   c_4 > \sqrt{2} c_3 \\
   R_1 < 0,
  \end{cases}
 \end{equation*}
 where
 \begin{equation*}
  R_1 := \frac{c_4 - \sqrt{2}c_3}{\sqrt{2}c_1 - \sqrt{2}c_2 + c_4}.
 \end{equation*}
 
 \item Arguing in a similar way, we conclude that
 $J_\infty(\mathrm{ESC}) < J_\infty(\mathrm{BD})$ if and only if
 \begin{equation*}
  \begin{cases}
   c_4 > \sqrt{2} c_3 \\
   c_2 > c_1 \\
   R_2 < \rho < 1 \\
  \end{cases}
  \qquad \textrm{or} \qquad
  \begin{cases}
   c_4 > \sqrt{2} c_3 \\
   c_1 > c_2 \\
   0 < \rho < R_2, \\
  \end{cases}
 \end{equation*}
 where
 \begin{equation*}
  R_2 := \frac{c_4 - 2 c_3}{2 c_1 - 2 c_2}.
 \end{equation*}
\end{itemize}

\section{Numerics}
\label{sec:numerics}
Let $\bar{\lambda}^2 = \dfrac{2C \lambda^2}{L}$, and we take
\begin{equation}
B = 0.64 \times 10^4 \mathrm{Nm}^{-2}, \quad C = 0.35 \times 10^4 \mathrm{Nm}^{-2}, \quad A = - \frac{B^2}{3C}, \quad \bar{\lambda}^2 = 200
\end{equation}
throughout this section if not stated differently. We choose this special value of $A$ because in the absence of a square inclusion, the WORS has a particularly simple parametrization in terms of a single variable $q_1$ and constant $q_3$ (see (\ref{eq:d1})) at this temperature \cite{majcanevarispicer2017}. In fact, this is the only temperature for which the system (\ref{eq:d4}) has a solution with constant $q_3$. Of course, we cannot have solutions with constant $q_3$ for this model problem because of the inhomogeneous boundary conditions but we still regard this temperature as a special reference point which allows for easy comparison with the results in \cite{majcanevarispicer2017}. We assume that $\bar{\lambda}^2 = 200$ is large enough for the asymptotic estimates in Section \ref{sec:gamma} to hold; we have also checked the trends with larger values of $\bar{\lambda}^2$ and they are qualitatively the same.

\subsection{Transition Costs} 

First, we compute the transition costs defined in (\ref{costs}). 
According to standard arguments in Riemannian geometry,
the intrinsic distance $d(\qvec_0, \, \qvec_1)$ defined in (\ref{dist}) can be calculated alternatively as
\begin{equation*} \label{dist_1}
 d(\qvec_0, \, \qvec_1) =\inf\left\{ \left( \int_0^1 F(\qvec(t))|\qvec^\prime(t)|^2 \,\d t \right)^{1/2} ~\bigg|~ \qvec\in C^1([0, \, 1]; \, \Rr^2), \ \qvec(0) = \qvec_0, \ \qvec(1) = \qvec_1 \right\} \, .
\end{equation*}
The profiles of geodesic $\qvec(t) = (q_1(t), q_3(t))$ in each case of (\ref{costs}) are shown in Fig. \ref{F1}; these are the optimal profiles which minimise the intrinsic distance between the four critical points $\mathbf{o}, \mathbf{p}_1, \mathbf{p}_2, \mathbf{p}_3$ and the associated costs are given below:
\begin{equation}
c_1 = 22.3067, \quad c_2 = 34.7378, \quad c_3 = 41.6817, \quad c_4 = 60.2955.
\end{equation}
\begin{figure}[!h]
\centering
\includegraphics[width = 0.7 \linewidth]{./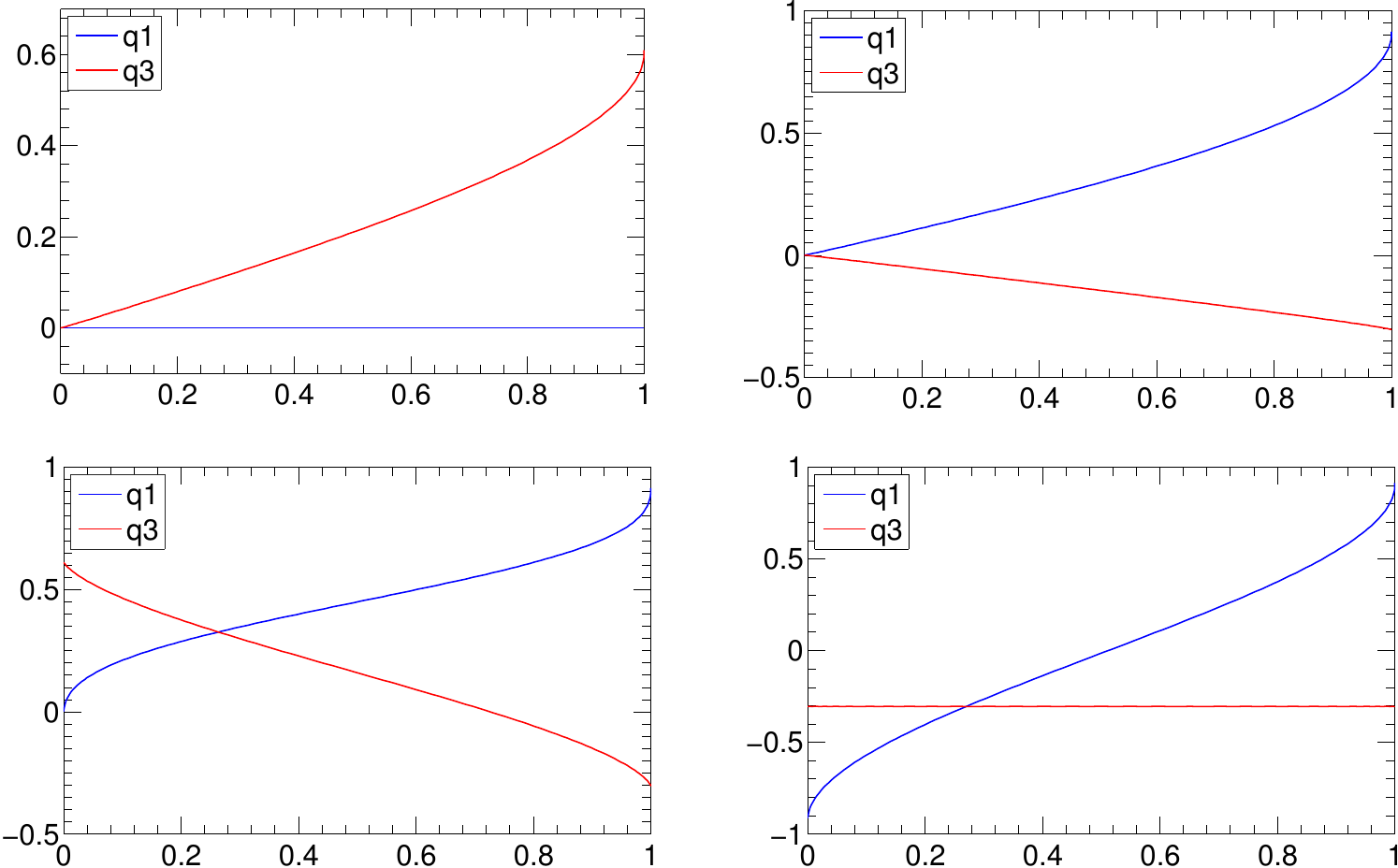}
\caption{The profiles of geodesic $\qvec(t) = (q_1(t), q_3(t))$ for different $\qvec_0$ and $\qvec_1$ ($A = - \frac{B^2}{3C}$).}\label{F1}
\end{figure} 

Hence,
\begin{equation}
  c_2 > c_1, \quad c_4 > \sqrt{2} c_3, \quad R_2 > R_1.
\end{equation}
 In view of the discussion in the previous section, $$\min\{J_{\infty}({\rm WORS}), \, J_{\infty}({\rm BD})\} < J_{\infty}({\rm ESC})$$ requires that
\begin{equation}\label{ESC_no_min_1}
R_2 < \rho < R_1, ~~\text{if}~ c_4 > \sqrt{2} c_3 ~\text{and}~ c_2 > c_1,
\end{equation}
which cannot hold since $R_2 > R_1$. Therefore, ESC cannot be energetically preferred to either the WORS or BD for this choice of parameters.

Next, we perform a systematic search of the parameter space in terms of $A$, for fixed $B$ and $C$; the transition costs $c_i$ and the 
quantities $R_i$, as a function of the reduced temperature $t = \frac{27 AC}{B^2}$, are numerically computed and plotted in Fig. \ref{F2}. We note that $c_2 > c_1$ and $R_2 > R_1$ hold true in all the numerical simulations. If $c_4 > \sqrt{2}c_3$, then the same arguments as above apply to exclude the ESC as a competitor for an energy minimizer; if $c_4 <\sqrt{2}c_3$, then the ESC cannot be energetically preferred to the WORS or BD according to the estimates in the previous section.  



\begin{figure}[!htb]
 \centering
 \includegraphics[width = 0.75 \linewidth]{./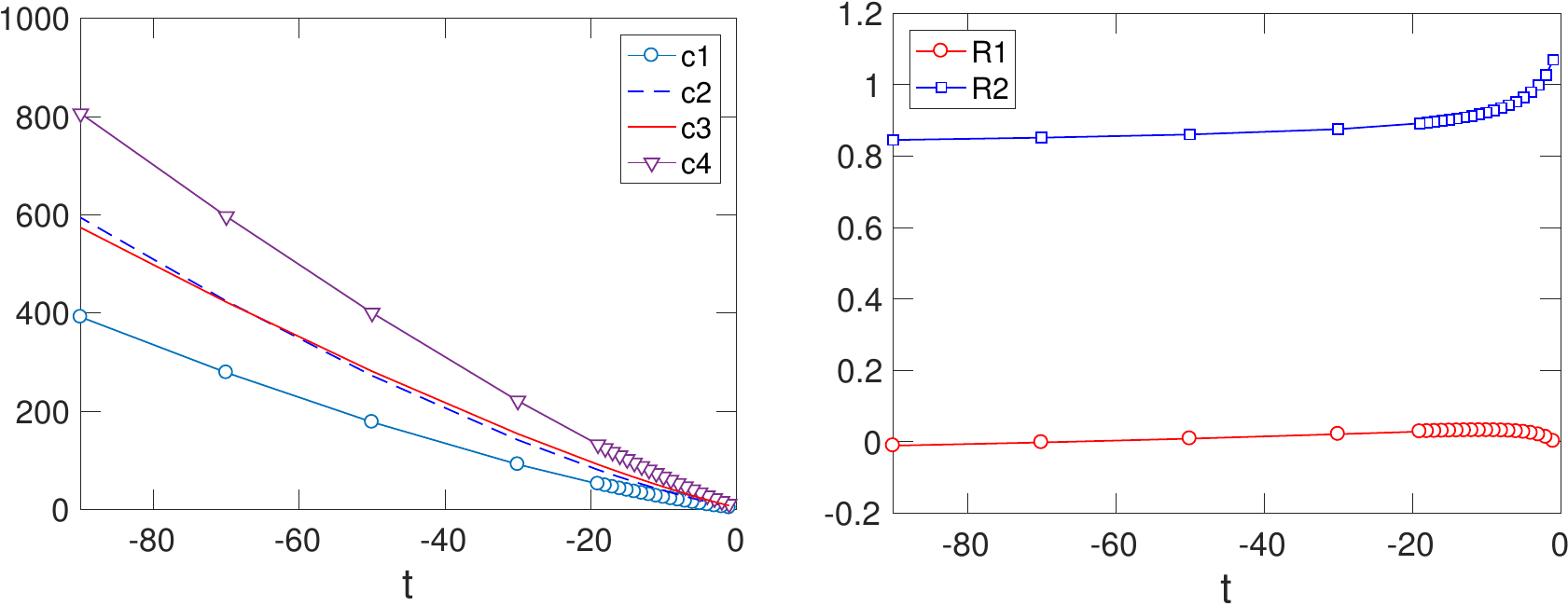}
\caption{The value of $c_i$ and $R_i$ as a function of $t = \frac{27 AC}{B^2}$ }\label{F2}
\end{figure}

\subsection{WORS and BD on a square with an isotropic core}
For the following simulations, we take
\begin{equation}
\Omega = \{(x, y) \in \mathbb{R}^2\colon \rho < \max\{|x|, \, |y|\} < 1 \},
\end{equation}
as the computational domain and seek numerical solutions of the form
\begin{equation}\label{q13}
\Qvec(x,y) = q_1(x, y)(\e_x \otimes \e_x - \e_y \otimes \e_y)  + q_3(x, y) (2 \e_z \otimes \e_z - \e_x \otimes \e_x - \e_y \otimes \e_y), 
\end{equation}
where $\e_x$, $\e_y$ and $\e_z$ are unit-vectors in the $x$-, $y$- and $z$-directions respectively, subject to the boundary conditions
\begin{equation}\label{BC-general}
\begin{aligned}
& \Qvec(x, y) = 0 \qquad \quad \text{on}~\Gamma_{\rm in}, \\
& \Qvec(x, \pm 1) = ~~\frac{s_{+}}{2} (\e_x \otimes \e_x - \e_y \otimes \e_y) - \frac{s_{+}}{6} (2 \e_z \otimes \e_z - \e_x \otimes \e_x - \e_y \otimes \e_y), \\
& \Qvec(\pm 1, y) = - \frac{s_{+}}{2} (\e_x \otimes \e_x - \e_y \otimes \e_y) - \frac{s_{+}}{6} (2 \e_z \otimes \e_z - \e_x \otimes \e_x - \e_y \otimes \e_y). \\
\end{aligned}
\end{equation}

For LdG critical points of the form (\ref{q13}), the Euler-Lagrange equations (\ref{eq:6}) reduce to 
\begin{equation}\label{eq_q13}
\begin{cases}
\Delta q_1 = \bar{\lambda}^2 \left( \dfrac{A}{2C} q_1 + \dfrac{B}{C} q_1 q_3 + (q_1^2 + 3 q_3^2) q_1 \right) \\  
\Delta q_3 = \bar{\lambda}^2 \left( \dfrac{A}{2C} q_3 + \dfrac{B}{C} (\frac{1}{3}q_1^2 - q_3^2) + (q_1^2 + 3 q_3^2) q_3 \right). \\
\end{cases}
\end{equation} 

We use a standard finite-difference method and Newton's Method to solve the system of coupled partial differential equations (\ref{eq_q13}). We plot the profiles for $q_1$, $q_3$ and 
biaxiality parameter $$\beta^2 = 1 - 6 \frac{\left( \tr(\Qvec^3) \right)^2}{\left( \tr(\Qvec^2) \right)^3}$$ in the WORS and BD for $\bar{\lambda}^2 = 200$ and $\rho = 0.2$ in Fig. \ref{BD_OR}. 
The biaxiality parameter $\beta^2 \in \left[0, 1 \right]$ for $\Qvec \neq \mathbf{0}$, $\beta^2$ vanishes when $\Qvec$ has two degenerate non-zero eigenvalues, and $\beta^2$ is unity when one of the eigenvalues vanishes and the corresponding $\Qvec$ is maximally biaxial \cite{majcanevarispicer2017}.
\begin{figure}[!hbt]
\centering
\includegraphics[width = 0.8\textwidth]{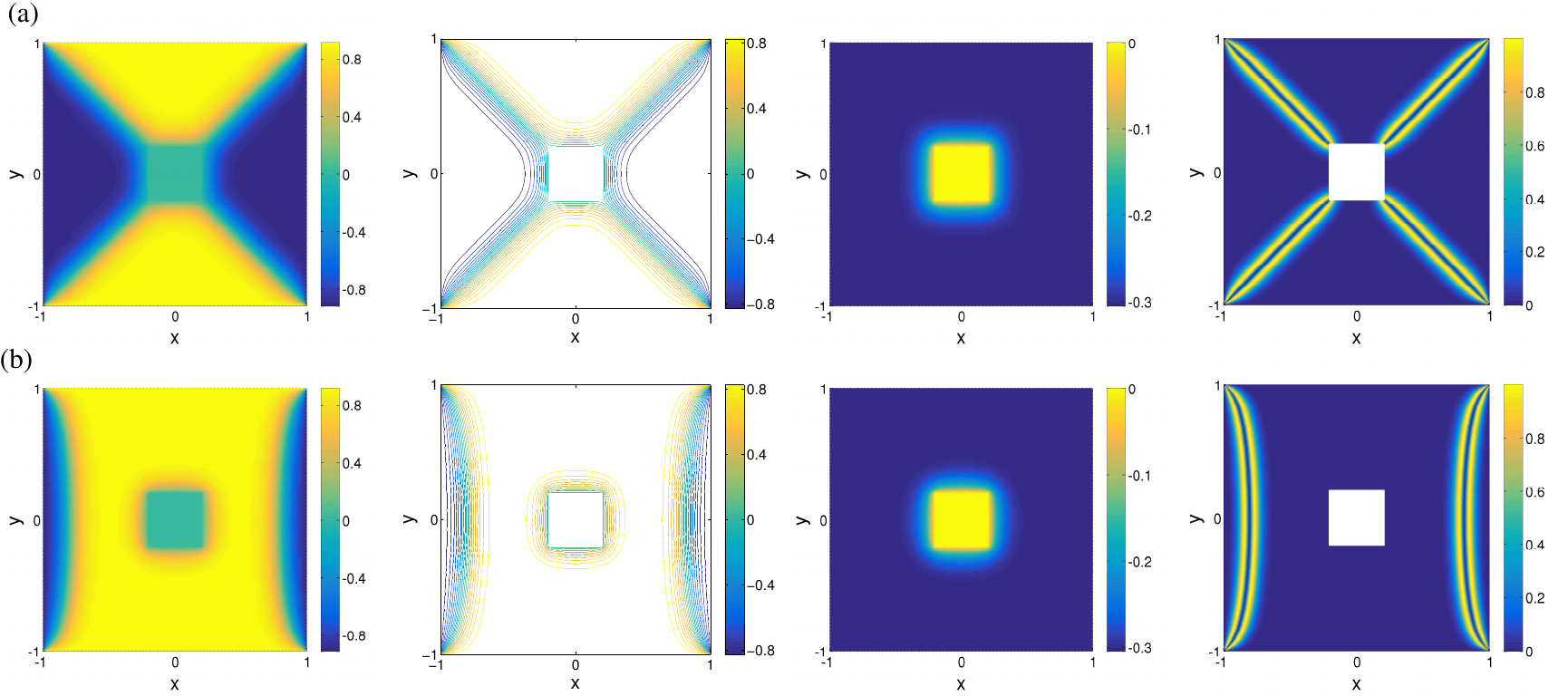}
\caption{(a) WORS for $\bar{\lambda}^2 = 200$ and $\rho = 0.2$. Left to right: plot and contour plot of $q_1$, plot of $q_3$, and plot of biaxiality parameter $\beta^2$. 
(b) BD for $\bar{\lambda}^2 = 200$ and $\rho = 0.2$. Left to right: plot and contour plot of $q_1$, plot of $q_3$, and plot of biaxiality parameter $\beta^2$.}\label{BD_OR}
\end{figure}
The WORS has a uniaxial cross with negative order parameter, connecting the vertices of the inner square and the outer square. The BD solution is distinguished by a pair of edge transition layers, localized near $x = \pm 1$ (or $y = \pm 1$). In both cases, $q_3$ decreases monotonically from zero on the inner boundary to $q_3 = -\frac{B}{6C}$ on the outer boundary.

We compare the free energies of BD and WORS for $\bar{\lambda}^2 = 200$ and various $\rho$ in Fig. \ref{BD_OR-energy}(a), which shows that WORS is energetically preferred for relatively large $\rho$. Indeed, the $\Gamma$-convergence argument in the previous section shows that, in the limit $\bar{\lambda}^2 \rightarrow \infty$, we have $J_{\infty}({\rm WORS}) < J_{\infty}({\rm BD})$ if and only if  $\rho > 1 - \sqrt{2}/2$. Numerically, we compute the critical value $\rho_0(\bar{\lambda}^2)$, such that $J_{\bar{\lambda}^2}({\rm BD}) = J_{\bar{\lambda}^2}({\rm WORS})$ when  $\rho = \rho_0(\bar{\lambda}^2)$, as a function of $\bar{\lambda}^2$ in Fig. \ref{BD_OR-energy}(b).
Qualitatively, we see that $\rho_0(\bar{\lambda}^2) \rightarrow 1 - \sqrt{2}/2$ when $\bar{\lambda}^2 \rightarrow \infty$, in agreement with the $\Gamma$-convergence results in the previous section. 

Since WORS is the unique LdG critical point for  either $\lambda$ sufficiently small or for $\rho$ sufficiently close to $1$, BD cannot be a critical point of the functional \eqref{eq:J} for either large $\rho$ or small $\bar{\lambda}^2$. Numerically, we find that for each $\bar{\lambda}^2$, there exists a critical value $\rho_1(\bar{\lambda}^2)$, for which BD is no longer a critical point of the functional \eqref{eq:J} when $\rho \geq \rho_1(\bar{\lambda}^2)$. This critical value $\rho_1(\bar{\lambda}^2)$ is found by increasing $\rho$ gradually till we cannot numerically obtain a  BD solution with a BD-like initial guess, even with the deflation technique \cite{farrell2015deflation}.
For $\bar{\lambda}^2 = 100$, $\rho_1 \approx 0.28$, whilst for the $\bar{\lambda}^2 = 200$, $\rho_1 \approx 0.42$. $\rho_1(\bar{\lambda}^2)$ as a function of $\bar{\lambda}^2$ is shown in Fig. \ref{BD_OR-energy}(c). By adapting the arguments in \cite{lamy2014} to a truncated square annulus such as ours (see also the proof of Proposition~\ref{prop:2} for more details), we can show that 
the LdG energy (\ref{eq:2}) is strictly convex for
$$ 1 - C_1\bar{\lambda}^{-1} < \rho < 1, $$
where $C_1$ is a positive constant independent of $\rho$ and $\bar{\lambda}^2$. 
Therefore, the LdG energy has a unique critical point, which is the WORS, for $\rho$ in this range and as $\bar{\lambda}^2$ increases, this range becomes narrower as illustrated by the estimate above.


\begin{figure}[!htb]
\centering
\includegraphics[width = \linewidth]{./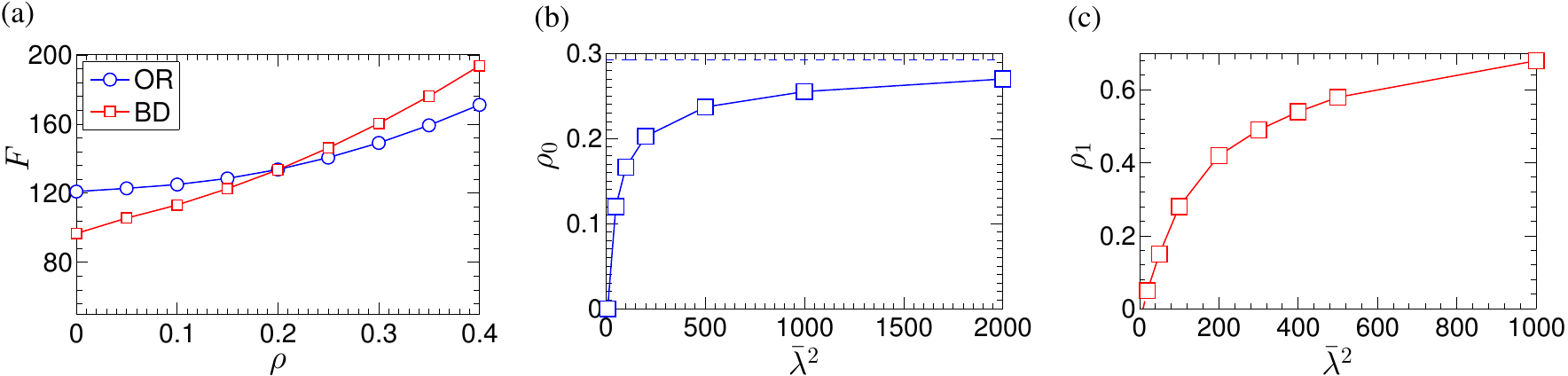}
\caption{(a) the free energy of {\rm WORS} and {\rm BD} for various $\rho$ for $\bar{\lambda}^2 = 200$. (b) the  critical value $\rho_0$, for which $J_{\bar{\lambda}^2}({\rm BD}) = J_{\bar{\lambda}^2}({\rm WORS})$ when $\rho > \rho_0$. (c)the  critical value $\rho_1(\bar{\lambda}^2)$, for which BD is no longer a critical point of the functional \eqref{eq:J} when $\rho \geq \rho_1(\bar{\lambda}^2)$.}\label{BD_OR-energy}
\end{figure}

We test the stabilities of the WORS and BD by solving the gradient flow equations for $q_1$ and $q_3$ in $\Omega$ as shown below:
\begin{equation}\label{grad_q13}
\begin{cases}
\pp_t q_1 = \Delta q_1 - \bar{\lambda}^2 \left( \dfrac{A}{2C} q_1 + \dfrac{B}{C} q_1 q_3 + (q_1^2 + 3 q_3^2) q_1 \right) \\  
\pp_t q_3 = \Delta q_3 - \bar{\lambda}^2 \left( \dfrac{A}{2C} q_3 + \dfrac{B}{C} (\dfrac{1}{3}q_1^2 - q_3^2) + (q_1^2 + 3 q_3^2) q_3 \right), \\
\end{cases}
\end{equation}
for $\bar{\lambda}^2 = 200$, subject to the Dirichlet boundary conditions~\eqref{eq:bc0}, \eqref{eq:bc1} and \eqref{eq:bc2} and different initial conditions. We use a standard finite-difference method for the spatial derivatives and the Crank-Nicolson scheme~\cite{iserles2009first} for time-stepping in the numerical simulations.

In Fig. \ref{OR}, we solve (\ref{grad_q13}) with a WORS-like initial condition as described below
\begin{equation}\label{eq:wors_ic}
q_1(x, y) = 
\begin{cases}
~~ s_{+}/2 & \textrm{for } -|y| < x < |y| \\
- s_{+}/2 & \textrm{for } -|x| < y < |x|, \\
\end{cases}
\quad q_3(x, y) = -s_{+}/6, \quad \forall (x, y) \in \Omega
\end{equation}
for $\bar{\lambda}^2 = 200$ with $\rho = 0.02$ and $\rho = 0.1$, respectively. The dynamic evolutions of $q_1$ in both cases are shown in Fig. \ref{OR}. For both cases, $\rho$ is in the range for which  $J({\rm WORS}) > J({\rm BD})$ according to Fig. \ref{BD_OR-energy}(a) and yet the dynamic evolutions are different for $\rho = 0.02$ and $\rho = 0.1$. For $\rho = 0.02$, the initial condition with the diagonal cross (see \eqref{eq:wors_ic}), evolves to BD, which indicates that WORS is unstable when $\rho$ is very small. However, for $\rho = 0.1$, the solution converges to the WORS although WORS has higher free energy than BD, which indicates that the WORS is metastable with a basin of attraction.
\begin{figure}[!htb]
\centering
\includegraphics[width = 0.8\linewidth]{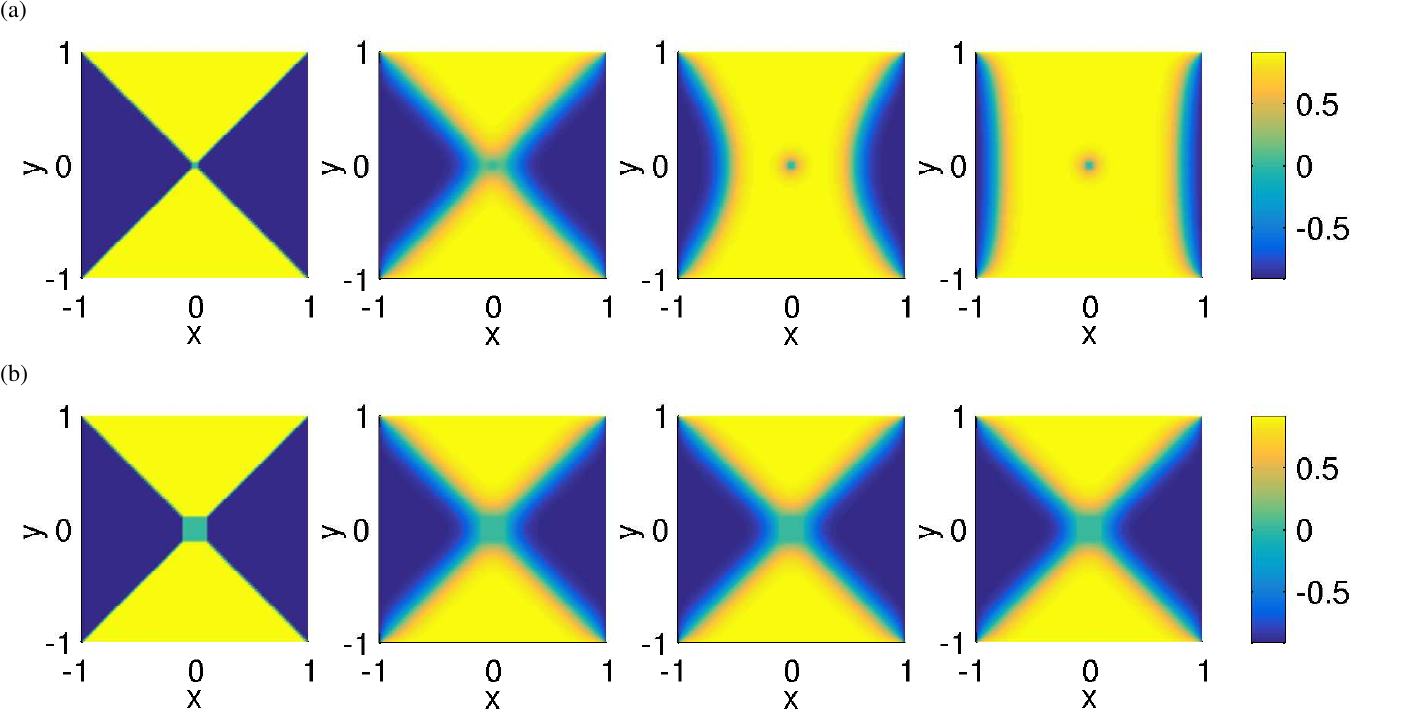}
\caption{(a) The profiles of $q_1$ for t = 0, t = 0.2, t = 0.5 and t = 2($\rho = 0.02$, $\bar{\lambda}^2 = 200$). 
(b) The profiles of $q_1$ for t = 0, t = 0.2, t = 0.5 and t = 2($\rho = 0.1$, $\bar{\lambda}^2 = 200$). 
}\label{OR}
\end{figure}

We also solve  (\ref{grad_q13}) with a BD-like initial condition for $\bar{\lambda}^2 = 200$ with $\rho = 0.4$ and $\rho = 0.44$. The dynamic evolutions of $q_1$ are displayed in Fig. \ref{BD}, for both cases. The previous discussions illustrate that BD ceases to be a critical point of the functional \eqref{eq:J} for $\rho \gtrsim 0.42$. We choose two values of $\rho$ that are at either end of this critical value. For $\rho = 0.4$, the numerical solution converges to BD, although BD has higher free energy than WORS, which indicates that the BD state is metastable. For $\rho = 0.44$, for which there is no BD-type critical point, the solution converges to WORS as expected.
\begin{figure}[!htb]
\centering
\begin{overpic}[width = 0.8 \linewidth]{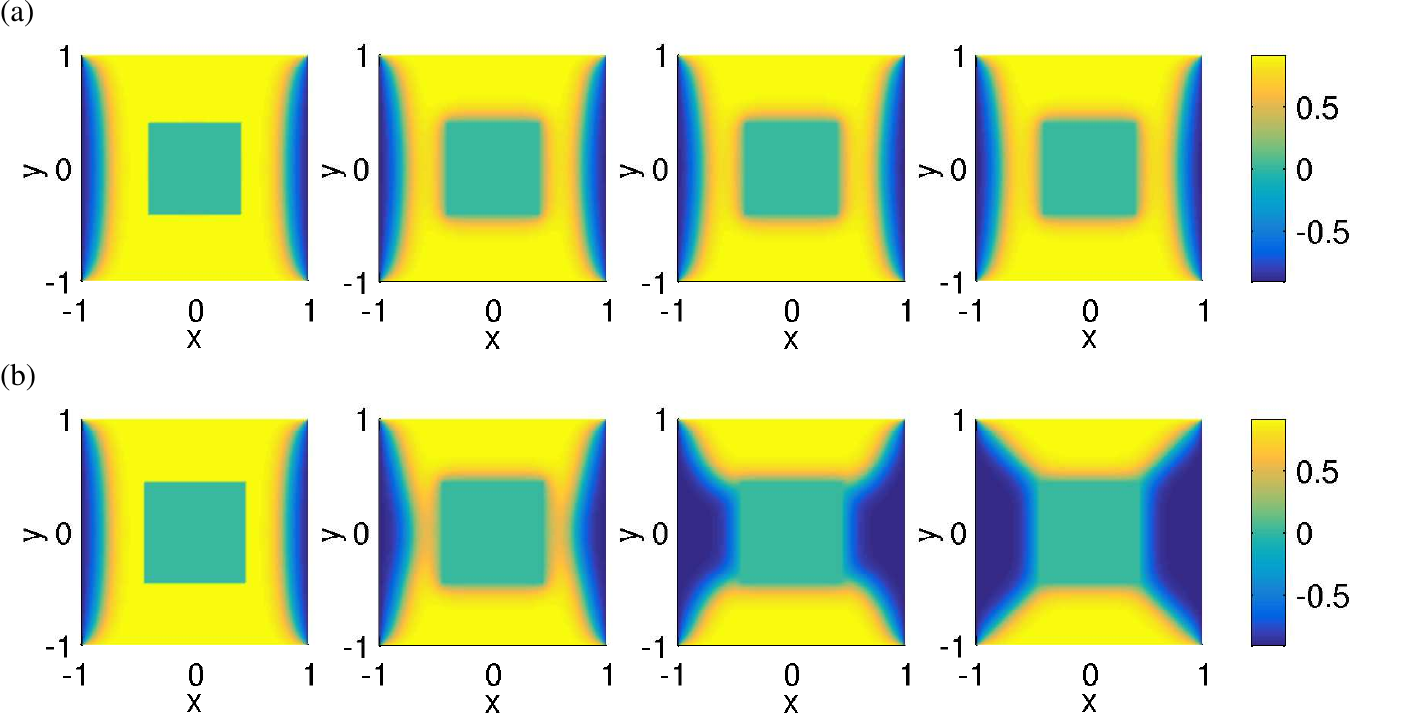}
\end{overpic}
\caption{(a) The profiles of $q_1$ for t = 0, t = 0.3, t = 0.4 and t = 2
($\rho = 0.4$, $\bar{\lambda}^2 = 200$). 
(b) The profiles of $q_1$ for t = 0, t = 0.3, t = 0.4 and t = 2 
($\rho = 0.44$, $\bar{\lambda}^2 = 200$). 
}\label{BD}
\end{figure}

\subsection{Decomposition of the Second Variation of the LdG energy}
The gradient flow simulations give us some information about the stabilities 
of WORS and BD in the restricted class of $\Qvec$ that have the form (\ref{q13}). In the following, we consider the second variation of the LdG energy (\ref{eq:rescaled}) 
about the WORS and BD-solutions, for arbitrary perturbations with five degrees of freedom. As is standard in variational problems in the calculus of variations, a solution is locally stable if the second variation of the LdG energy is positive for all admissible perturbations and a solution is unstable if we can find a perturbation for which the second variation is negative.
Consider a perturbation about the WORS or BD solutions of the form $\mathbf{W} = \Qvec + \epsilon \mathbf{V}$, where $\mathbf{V}$ vanishes at the boundary. The second variation of the LdG energy is given by:
\begin{equation}
\delta^2 F(\mathbf{V}) = \int_{\Omega} \dfrac{\lambda^2}{L} \left( A |\mathbf{V}|^2 - 2 B Q_{ij}V_{jk}V_{ki} + C |\Qvec|^2|\mathbf{V}|^2 + 2 C (\Qvec \cdot \mathbf{V})^2 \right) + |\nabla \mathbf{V}|^2  \dd \x.
\end{equation}
We write $\mathbf{V}$ as (see \cite{majcanevarispicer2017})
\begin{equation}
\begin{aligned}
\mathbf{V}(x,y) & = v_1(x, y)(\e_x \otimes \e_x - \e_y \otimes \e_y) + v_2(x, y) (\e_x \otimes \e_y + \e_y \otimes \e_x) \\
       & + v_3(x, y) (2 \e_z \otimes \e_z - \e_x \otimes \e_x - \e_y \otimes \e_y) \\
       & + v_4(x, y)(\e_x \otimes \e_z + \e_z \otimes \e_x) + v_5(x, y)(\e_y \otimes \e_z + \e_z \otimes \e_y ), \\
\end{aligned}
\end{equation}
where we treat the functions, $v_1 \ldots v_5$, as perturbations in the five independent basis directions.


For LdG critical points with $q_2 = q_4 = q_5 = 0$, such as the WORS and BD solutions with a constant eigenframe, we have
\begin{equation}\label{Per_V}
  \begin{aligned}
    \delta^2 F(\mathbf{V}) = \int_{\Omega} & \bar{\lambda}^2 \Biggl( \frac{A}{C} (v_1^2 + v_2^2 + 3 v_3^2 + v_4^2 + v_5^2) \\
    & -  \frac{B}{C}  \bigl( q_1 (v_4^2 - v_5^2) - 2 q_3 (v_1^2 + v_2^2) + 6q_3v_3^2 + q_3(v_4^2 + v_5^2) - 4q_1v_1v_3 \bigr) \\
    & + 2 \left( q_1^2 + 3 q_3^2 \right)(v_1^2 + v_2^2 + 3 v_3^2 + v_4^2 + v_5^2) + 4 (q_1v_1 + 3 q_3 v_3)^2 \Biggr) \\
    & + \left( 2 |\nabla v_1|^2 + 2 |\nabla v_2|^2 + 6 |\nabla v_3|^2 + 2 |\nabla v_4|^2 + 2 |\nabla v_5|^2 \right) \dd \x. \\
    \end{aligned}    
\end{equation}
where $v_i \in W_0^{1,2} \left(\Omega\right)$.

We can write (\ref{Per_V}) as
\begin{equation}
\delta^2 F(\mathbf{V}) = \delta^2 F(v_1, v_3) + \delta^2 F (v_2) + \delta^2 F(v_4) + \delta^2 F(v_5),
\end{equation}
where
\begin{equation}
\begin{aligned}
& \delta^2 F(v_1, v_3) = \int_{\Omega} \bar{\lambda}^2 \Biggl( \left( \frac{A}{C} + \frac{2B}{C} q_3 + 6 (q_1^2 + q_3^2) \right) v_1^2  + \left( \frac{3A}{C} - \frac{6B}{C}q_3 + 6 q_1^2 + 54 q_3^2 \right) v_3^2  \\
& \qquad \qquad \qquad \qquad \qquad  + \left( \frac{4B}{C} q_1   + 24 q_1 q_3 \right)  v_1v_3 \Biggr) + \left( 2 |\nabla v_1|^2 + 6 |\nabla v_3|^2 \right) \dd \x, \\
& \delta^2 F(v_2)  = \int_{\Omega} \bar{\lambda}^2 \Biggl( \frac{A}{C}  + \frac{2 B}{C} q_3 + 2 \left( q_1^2 + 3 q_3^2 \right) \Biggr) v_2^2 + 2 |\nabla v_2|^2  \dd \x, \\
& \delta^2 F(v_4)  = \int_{\Omega} \bar{\lambda}^2 \Biggl( \frac{A}{C} - \frac{B}{C} (q_1 + q_3) + 2 \left( q_1^2 + 3 q_3^2 \right) \Biggr) v_4^2  + 2 |\nabla v_4|^2 \dd \x, \\
& \delta^2 F(v_5)  = \int_{\Omega}  \bar{\lambda}^2 \Biggl( \frac{A}{C} - \frac{B}{C} (q_3 - q_1) + 2 \left( q_1^2 + 3 q_3^2 \right) \Biggr) v_5^2 + 2 |\nabla v_5|^2 \dd \x. \\
\end{aligned}
\end{equation}

Define
\begin{equation*}
\begin{aligned}
& \mathcal{V}_{13} =  \left \{ v_1 \left( \e_x \otimes \e_x - \e_y \otimes \e_y \right) + v_3 \left( 2 \e_z \otimes \e_z - \e_x \otimes \e_x - \e_y \otimes \e_y \right) \right \}, \\
& \mathcal{V}_2 = \left \{v_2 \left( \e_x \otimes \e_y + \e_y \otimes \e_x \right) \right \},~~\mathcal{V}_4 =  \left \{v_4 \left( \e_x \otimes \e_z + \e_z \otimes \e_x \right) \right \},~~\mathcal{V}_5 =  \left \{v_5 \left( \e_y \otimes \e_z + \e_z \otimes \e_y \right) \right \}, \\
\end{aligned}
\end{equation*}
which are  subspaces of $S_0$. We can consider perturbations in each subspace respectively. The perturbations in $\mathcal{V}_{13}$ do not distort the constant eigenframes of the WORS or BD solutions, the perturbations in $\mathcal{V}_{2}$ are in-plane perturbations of the eigenframe and the perturbations in $\mathcal{V}_4$ and $\mathcal{V}_5$ are out-of-plane perturbations of the eigenframe.


Firstly, we consider $\delta^2 F(v_1, v_3)$, which can be regarded as a functional of $v_1$ and $v_3$, for given $q_1$ and $q_3$. We can minimize $\delta^2 F(v_1, v_3)$ by solving the gradient flow equations
\begin{equation}\label{grad_v13}
\begin{cases}
\dfrac{\pp v_1}{\pp t} = \Delta v_1 - \bar{\lambda}^2 \left( \dfrac{1}{2}C_{11}(x, y) v_1 + \dfrac{1}{4}C_{13}(x, y) v_3 \right)   \\
\dfrac{\pp v_3}{\pp t} = \Delta v_3 - \bar{\lambda}^2 \left( \dfrac{1}{24} C_{13}(x, y) v_1 + \dfrac{1}{12} C_{33}(x, y) v_3 \right), \\
\end{cases}
\end{equation}
where
\begin{equation}
\begin{aligned}
& C_{11}(x, y) = \frac{A}{C} + \frac{2B}{C} q_3 + 6 q_1^2 + 6 q_3^2, \quad C_{13}(x, y) = \frac{4 B}{C} q_1 + 24 q_1 q_3, \\
& C_{33}(x, y) = \frac{3A}{C} - \frac{6B}{C}q_3 + 6 q_1^2 + 54q_3^2.  \\
\end{aligned}
\end{equation}
For $\bar{\lambda}^2 = 200$, WORS is a critical point for
$0 \leq \rho < 1$, but is unstable for small-$\rho$. 
In Fig.~\ref{C123_v13}(a), we plot $C_{11}(x, y), C_{13}(x, y)$ and $C_{33}(x, y)$ for the WORS solution, using the numerically computed $q_1$ and $q_3$ corresponding to the WORS with $\rho = 0.02$. It is relatively straightforward to find $v_1$ and $v_3$ such that
$\delta F(v_1, v_3) < 0$. An example is shown in Fig. \ref{C123_v13}(b).
\begin{figure}[!h]
\centering
\begin{overpic}[width = \linewidth]{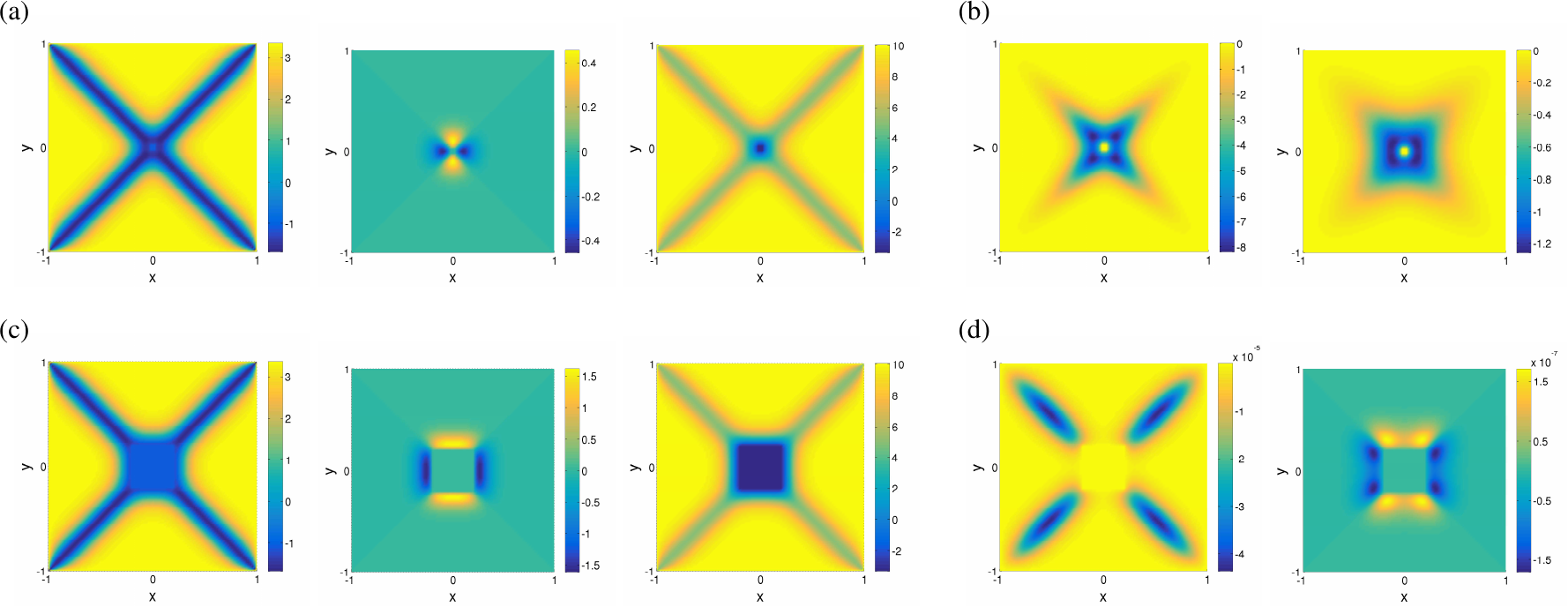}
\end{overpic}
\caption{(a) The profiles of $C_{11}(x, y)$, $C_{13}(x, y)$ and $C_{33}(x, y)$ for WORS with $\rho = 0.02$. (b) The profiles of $v_1$ and $v_3$ in a perturbation s.t. $\delta^2 F(v_1, v_3) < 0$ for WORS with $\rho = 0.02$. 
(c) The profiles of $C_{11}(x, y)$, $C_{13}(x, y)$ and $C_{33}(x, y)$ for WORS with $\rho = 0.2$. (d) The profiles of $v_1$ and $v_3$ in numerical solution of (\ref{grad_v13}) for WORS with $\rho = 0.2$.}\label{C123_v13}
\end{figure}
Indeed, in this case, $\delta^2 F(v_1, v_3)$ is not bounded from below,  because if we have $\delta^2 F(v_1^{*}, v_3^{*}) < 0$ for a particular choice of $v_1^*$ and $v_3^*$, then  $\delta^2 F(c v_1^{*}, c v_3^{*}) = c^2 \delta^2 F(v_1^{*}, v_3^{*})$ for every constant $c$, which can be arbitrarily negative by choosing $c$ to be sufficiently large. As expected, the optimal profiles are localised near the diagonals, as the WORS loses stability by losing the diagonal cross and hence, the optimal perturbations have $q_1 \neq 0$ on the square diagonals to reduce the LdG energy of the perturbed state compared to the WORS.

Next we consider the WORS with $\rho = 0.2$; the corresponding profiles of $C_{11}(x, y), C_{13}(x, y)$ and $C_{33}(x, y)$ are shown in Fig. \ref{C123_v13}.(c).We solve the the gradient flow equations (\ref{grad_v13}) with random initial data and the numerical solutions of (\ref{grad_v13}), shown in Fig. \ref{C123_v13}(d), converge to $v_1 = v_3 = 0$. This indicates that $\delta^2 F(v_1, v_3) \geq 0$. We find that $\delta^2 F(v_1, v_3) \geq 0$ for the WORS with $\rho \geq 0.05$, for 
$\bar{\lambda}^2 = 200$. This is consistent with the numerical simulations in \cite{martinrobinson2017} and \cite{majcanevarispicer2017} which suggest that the WORS solution loses stability with respect to BD-like solutions in the restricted class of solutions (\ref{eq:d1}) as either $\bar{\lambda}^2$ increases or $\rho$ decreases.


Similarly, we consider $\delta^2 F(v_1, v_3)$ for the BD solution, which is a critical point of the system (\ref{eq:d4})  for small-$\rho$. The numerical profiles of $C_{11}(x, y), C_{13}(x, y)$ and $C_{33}(x, y)$ for the BD-solution, with $\rho = 0.02$ and $\rho = 0.2$, are shown in Fig. \ref{BD_C123_v13}(a) and (c). In both cases, the numerical solutions of (\ref{grad_v13}), as displayed in Fig. \ref{BD_C123_v13}(b) and (d), converge to $v_1 = v_3 = 0$, which indicates that $\delta^2 F(v_1, v_3) \geq 0$ for the BD-solution, if BD is a critical point of the system. However, this is not a reflection on the stability of the BD solution with respect to arbitrary perturbations.
\begin{figure}[!ht]
\centering
\begin{overpic}[width = \linewidth]{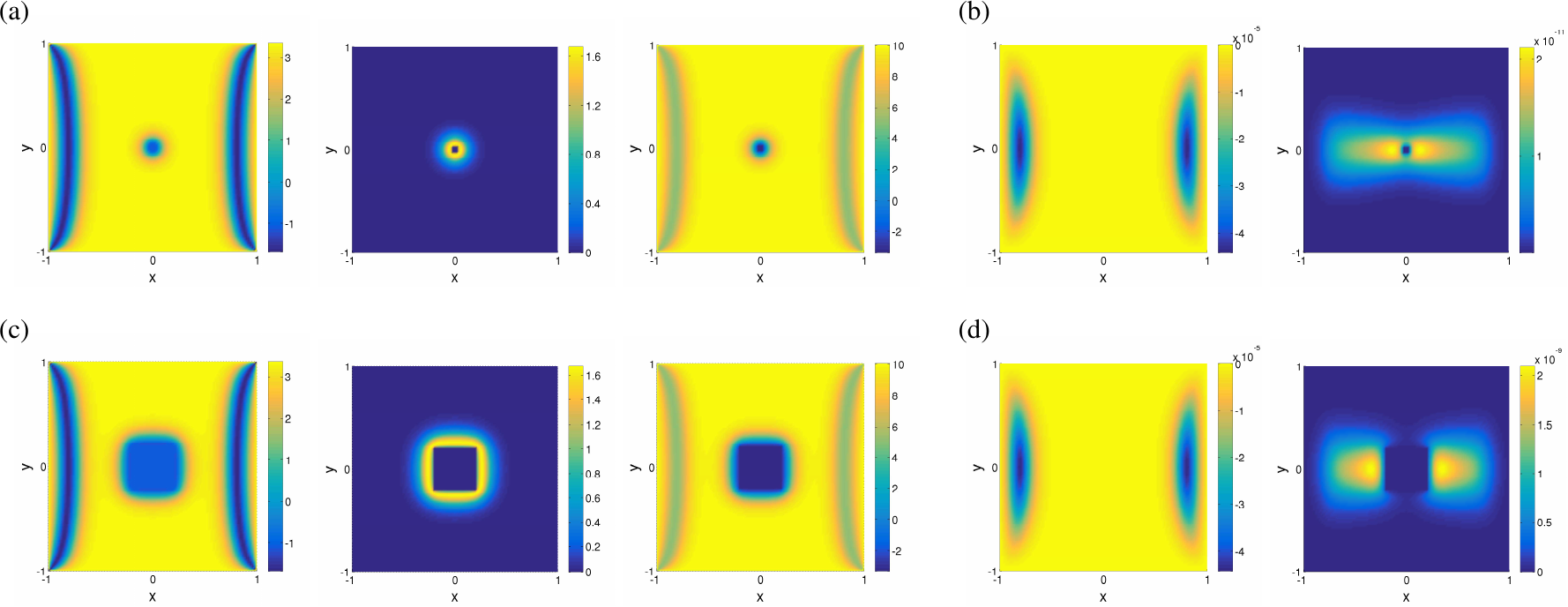}
\end{overpic}
\caption{(a) The profiles of $C_{11}(x, y)$, $C_{13}(x, y)$ and $C_{33}(x, y)$ for BD with $\rho = 0.02$. (b) The profiles of $v_1$ and $v_3$ in numerical solution of (\ref{grad_v13}) for BD with $\rho = 0.02$.
(c) The profiles of $C_{11}(x, y)$, $C_{13}(x, y)$ and $C_{33}(x, y)$ for BD with $\rho = 0.2$. (d) The profiles of $v_1$ and $v_3$ in numerical solution of (\ref{grad_v13}) for BD with $\rho = 0.2$.}\label{BD_C123_v13}
\end{figure}

Next, we consider $\delta^2 F(v_2)$. According to our numerical results, 
\begin{equation}
C_2(x, y) = \dfrac{A}{C}  + \dfrac{2 B}{C} q_3 + 2 ( q_1^2 + 3 q_3^2) \leq 0
\end{equation} 
for both the WORS and BD solutions, for $\forall \rho$. The profiles of $C_2(x, y)$ for WORS and BD with $\rho = 0.2$ are shown in Fig. \ref{Coe_v2}(a) and (c). Indeed, we can check that for $s_{+} = B/C$, $C_2(x, y) = 0$ if $(q_1, q_3) = (\pm s_{+}/2, -s_{+}/6)$. Since $C_2(x, y) \leq 0$, it is relatively straightforward to find $v_2$ for which $\delta^2 F(v_2) < 0$, for both WORS and BD-solutions when $\rho$ is small. Fig. \ref{Coe_v2}(b) is an example of $v_2$ s.t. $\delta^2 F(v_2) < 0$ for WORS with $\rho = 0.2$, and Fig. \ref{Coe_v2}(d) is an example of $v_2$ s.t. $\delta^2 F(v_2) < 0$ for BD with $\rho = 0.2$. 
It turns out that we can find a $v_2$ such that $\delta^2 F(v_2) < 0$ for the BD-solution, $\forall\rho$ for which the BD-solution exists i.e. the BD-solution is always unstable with respect to perturbations of this kind. This is intuitively easy to understand since $C_2$ is numerically found to have the maximum magnitude along the transition layers featured by $q_1 = 0$. The BD-solution is distinguished by transition layers along a pair of parallel square edges which have a constant length independent of $\rho$. Consequently, we can always find an instability that manifests along the edge transition layer for the BD-solution, for all values of $\rho \leq \rho_1(\bar{\lambda}^2)$. This instability perturbs the constant eigenframe of the BD solution.
\begin{figure}[!ht]
\centering
\begin{overpic}[width = 0.75 \linewidth]{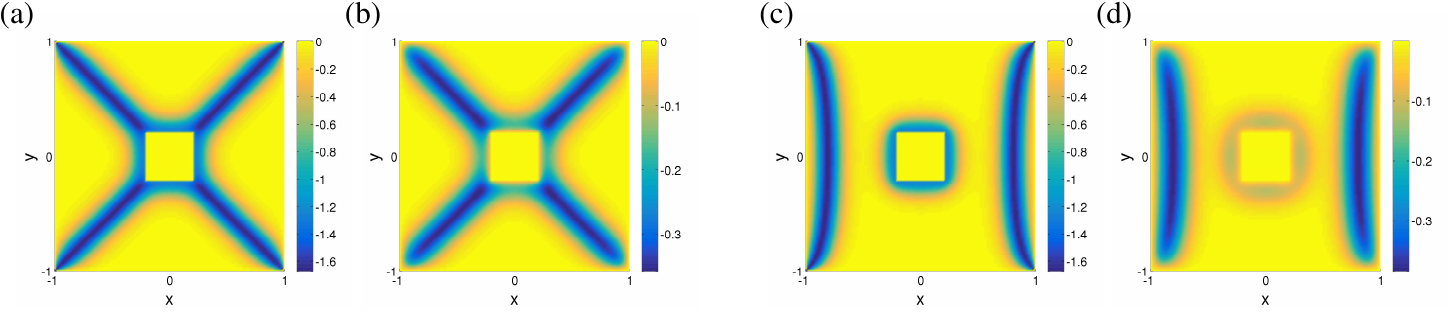}
\end{overpic}
\caption{(a) $C_2(x, y)$ for WORS with $\rho = 0.2$. (b) The profile of $v_2$ in a perturbation s.t. $\delta^2 F(v_2) < 0$ for WORS with $\rho = 0.2$. (c) $C_2(x, y)$ for BD with $\rho = 0.2$. (d) The profile of $v_2$ in a perturbation s.t. $\delta^2 F(v_2) < 0$ for BD with $\rho = 0.2$. }\label{Coe_v2}
\end{figure}

For $\rho$ sufficiently close to $1$, WORS is the unique LdG critical point. Hence, $\delta^2 F(v_2) \geq 0$ for the WORS, when $\rho$ is large enough. Numerically, we find that for $\bar{\lambda}^2 = 200$, $\delta^2 F(v_2) \geq 0$ when $\rho \geq 0.74$. Fig. \ref{Coe_v2_OR}(a) and (c) show the numerically computed profiles of $C_2(x, y)$ for the WORS solution, with $\rho = 0.7$ and $\rho = 0.8$ respectively. Fig. \ref{Coe_v2_OR}(b) illustrates a perturbation for which $\delta^2 F(v_2) < 0$ for the WORS with $\rho = 0.7$. For WORS with $\rho = 0.8$, we solve the gradient flow equation
\begin{equation}\label{grad_per_v2}
 \frac{\pp v_2}{\pp t} =   2 \Delta v_2 -  \bar{\lambda}^2 C_2(x, y) v_2, 
\end{equation}
with random initial data and find that the numerical solution converges to $v_2 = 0$, as shown in Fig. \ref{Coe_v2_OR}(d), which indicates that $\delta^2 F(v_2) \geq 0$ in this case. The WORS has transition layers along the diagonals of length $(1 - \rho )$. Hence, these transition layers get shorter as $\rho$ increases and we cannot find a $v_2$ such that $\delta^2 F(v_2) < 0$ for the WORS when $\rho$ is large enough.
\begin{figure}[!hbt]
\centering
\begin{overpic}[width = 0.8 \linewidth]{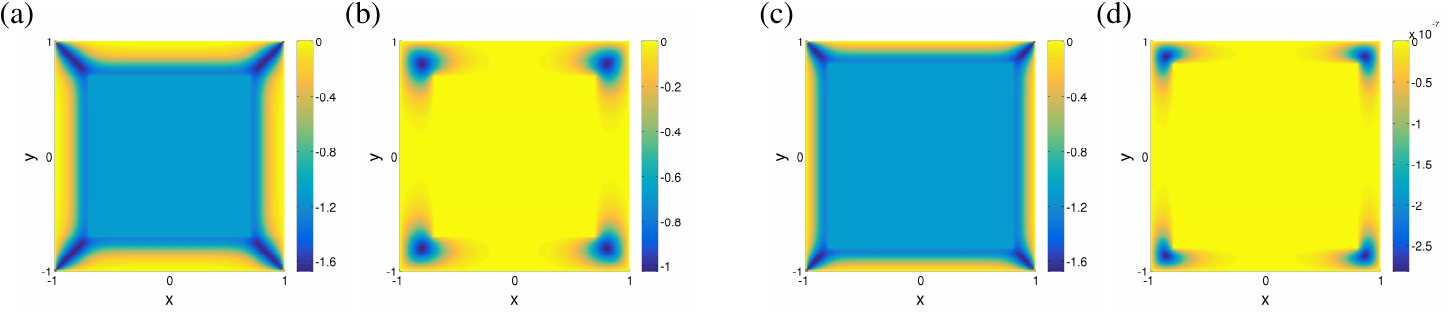}
\end{overpic}
\caption{(a) $C_2(x, y)$ for WORS with $\rho = 0.7$. (b) The profile of $v_2$ in a perturbation s.t. $\delta^2 F(v_2) < 0$ for WORS with $\rho = 0.7$.  (c) $C_2(x, y)$ for WORS with $\rho = 0.8$. (d) The profile of $v_2$ in numerical solution of (\ref{grad_per_v2}) for WORS with $\rho = 0.8$.}\label{Coe_v2_OR}
\end{figure}

Similarly, we can minimize $\delta^2 F(v_4)$ and $\delta^2 F (v_5)$ by solving the gradient flow equations for $v_4$ and $v_5$
\begin{equation}\label{grad_per_v45}
\begin{aligned}
& \frac{\pp v_4}{\pp t} =   2 \Delta v_4 -  \bar{\lambda}^2 C_4(x, y) v_4,  \\
& \frac{\pp v_5}{\pp t} =   2 \Delta v_5 -  \bar{\lambda}^2 C_5(x, y) v_5, \\
\end{aligned}
\end{equation}
with random initial data, where
\begin{equation}
C_4(x, y) = \frac{A}{C} - \frac{B}{C} (q_1 + q_3) + 2 \left( q_1^2 + 3 q_3^2 \right), \quad  C_5(x, y) =  \frac{A}{C} - \frac{B}{C} (q_3 - q_1) + 2 \left( q_1^2 + 3 q_3^2 \right).
\end{equation}
The profiles of $C_4(x, y)$ and $C_5(x, y)$ for the WORS-solution with $\rho = 0.02$, are shown in Fig. \ref{Coe_v45}(a), and the profiles of numerical solutions of (\ref{grad_per_v45}) are shown in Fig. \ref{Coe_v45}(b), which converge to $v_4 = v_5 = 0$. For BD with $\rho = 0.02$, the profiles of $C_4(x, y)$ and $C_5(x, y)$ are shown in Fig. \ref{Coe_v45}(c), and the numerical solutions of (\ref{grad_per_v45}) also converge to $v_4 = v_5 = 0$, as shown in Fig. \ref{Coe_v45}(d).
This can be informally understood since the numerical results show that $C_4$ and $C_5$ are negative  in a small region around the isotropic inclusion.
\begin{figure}[!ht]
\centering
\begin{overpic}[width = 0.8 \linewidth]{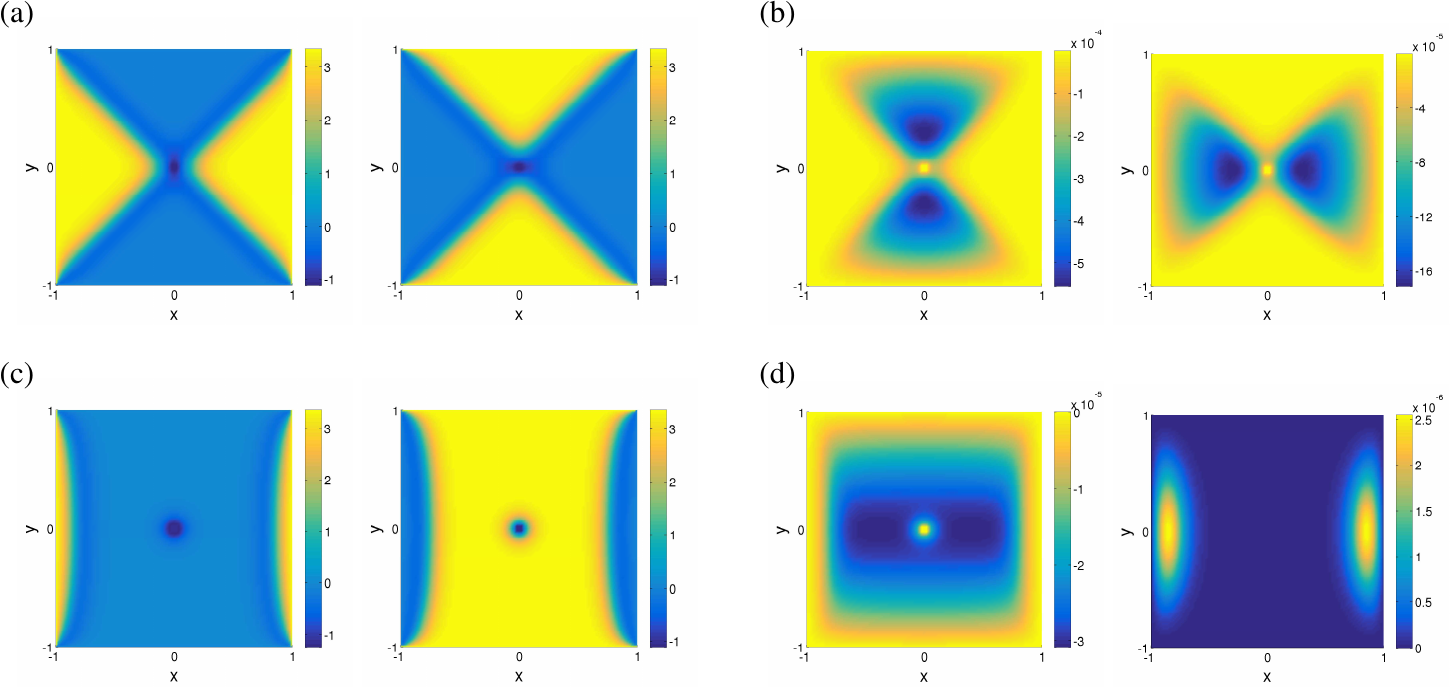}
\end{overpic}
\caption{(a) $C_4(x, y)$ and $C_5(x, y)$ for WORS with $\rho = 0.02$. (b) The profiles of $v_4$ and $v_5$ in numerical solution of (\ref{grad_per_v45}) for WORS with $\rho = 0.02$.
(c) $C_4(x, y)$ and $C_5(x, y)$ for BD with $\rho = 0.02$. (d) The profiles of $v_4$ and $v_5$ in numerical solution of (\ref{grad_per_v45}) for BD with $\rho = 0.02$.}\label{Coe_v45}
\end{figure}

Numerically, we find that for $A = - \dfrac{B^2}{3C}$ and $\bar{\lambda}^2 = 200$, 
\begin{itemize}
\item WORS is unstable over subspace $\mathcal{V}_{13}$ for small-$\rho$, but is stable over subspace $\mathcal{V}_{13}$ for large-$\rho$ ($\rho \geq 0.05$). BD is stable over $\mathcal{V}_{13}$ if BD is a critical point of the system ($\rho \leq 0.4$).
\item BD is unstable over subspace $\mathcal{V}_2$, WORS is unstable over $V_2$ for small-$\rho$, but is stable over $\mathcal{V}_2$ when $\rho$ is large enough ($\rho \geq 0.74$).
\item Both WORS and BD are stable over subspaces $\mathcal{V}_4 $ and $ \mathcal{V}_5$.
\end{itemize}
Hence, WORS is globally stable for $\rho$ sufficiently large, which is in accordance with Proposition \ref{prop:2}. Further, the BD-solution is always an unstable LdG critical point for this choice of parameters and we speculate that these stability results hold for $A<0$ and moderately large values of $\bar{\lambda}^2$.

\subsection{Non-existence of ESC}

We consider an ESC-like initial condition to investigate the existence/non-existence of LdG critical points with $q_3>0$ around the isotropic inclusion; the ESC-like initial condition has the form 
\begin{equation}
q_1(x, y) = 0, ~~q_3(x, y) = s_{+}/3, \quad 
\textrm{for } \ \rho < \max\{|x|, \, |y|\} < \rho + \eta. 
\end{equation}
We choose $\rho + \eta = 0.96$ with $\rho=0.02$ and $\rho=0.2$ respectively.
The numerical results are shown in Fig.~\ref{ES}.
In both cases, we find that $q_3 \leq 0$ everywhere for the final states, and
the gradient flow solutions (see equations (\ref{eq_q13})) evolve to a BD solution and to the WORS respectively.
\begin{figure}[!htb]
\centering
  \includegraphics[width = 0.8\linewidth]{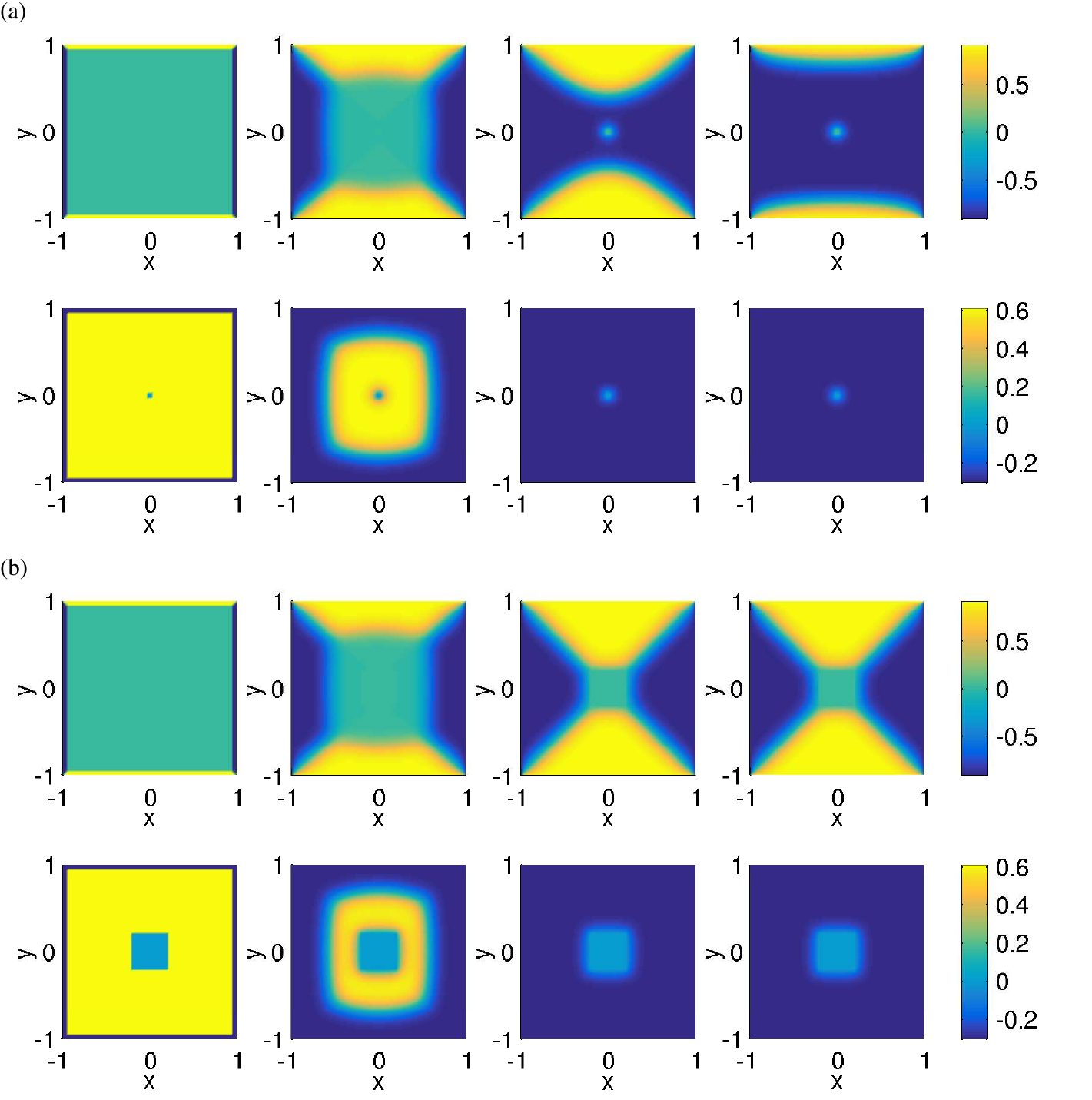}
\caption{(a) The profiles of $q_1$ and $q_3$ for t = 0, t = 1, t = 2 and t = 4
($\rho = 0.02$, $\bar{\lambda}^2 = 200$). 
(b)  The profiles of $q_1$ and $q_3$ for t = 0, t = 1, t = 2 and t = 4
($\rho = 0.2$, $\bar{\lambda}^2 = 200$).}\label{ES}
\end{figure}

For a small value of $\rho$, the numerical solution will evolve to the BD-solution by crossing WORS,
as shown in Fig. \ref{ES}(a).
By the $\Gamma$-convergence argument, we know that 
$J_{\infty}({\rm ESC}) < J_{\infty}({\rm WORS})$ requires
\begin{equation}\label{Ineq_rho_eta}
\sqrt{2}(c_1 -  c_2 + c_3 )\rho < (c_4 - \sqrt{2} c_3) \eta.
\end{equation}
However, during the dynamic evolution of the numerical solution, the value of $\eta$ decreases as time increases and the inequality (\ref{Ineq_rho_eta}) no longer holds.

The non-existence of ESC, at least within the restricted class of $\mathbf{Q}$-tensors of the form (\ref{q13}), is also supported by solving Euler-Lagrange equation (\ref{eq_q13}) using the deflation technique~\cite{farrell2015deflation}. 
The deflation technique enables us to discover multiple distinct solutions of (\ref{eq_q13}) with one initial guess. However, we haven't observed any  ESC-like solutions for several different choices of the initial conditions. For $\rho = 0.2$, we find 17 critical points. Six of them remain after discarding the the rotational symmetries, as shown in Fig. \ref{CP_rho_0_2}(a)--(f) by the profiles of $q_1$. The profiles of $q_3$ are almost the same for all cases, as shown in Fig. \ref{CP_rho_0_2}(g). Besides the WORS and BD, we find another type of metastable configuration in the restricted class, shown in Fig. \ref{CP_rho_0_2}(c), which is between the WORS and BD (retains half the diagonal cross and one edge transition layer). The critical points shown in Fig. \ref{CP_rho_0_2}(d)-(f) are saddle points even in the restricted two-dimensional class.
\begin{figure}[!htb]
\centering
\begin{overpic}[width = 0.8 \linewidth]{./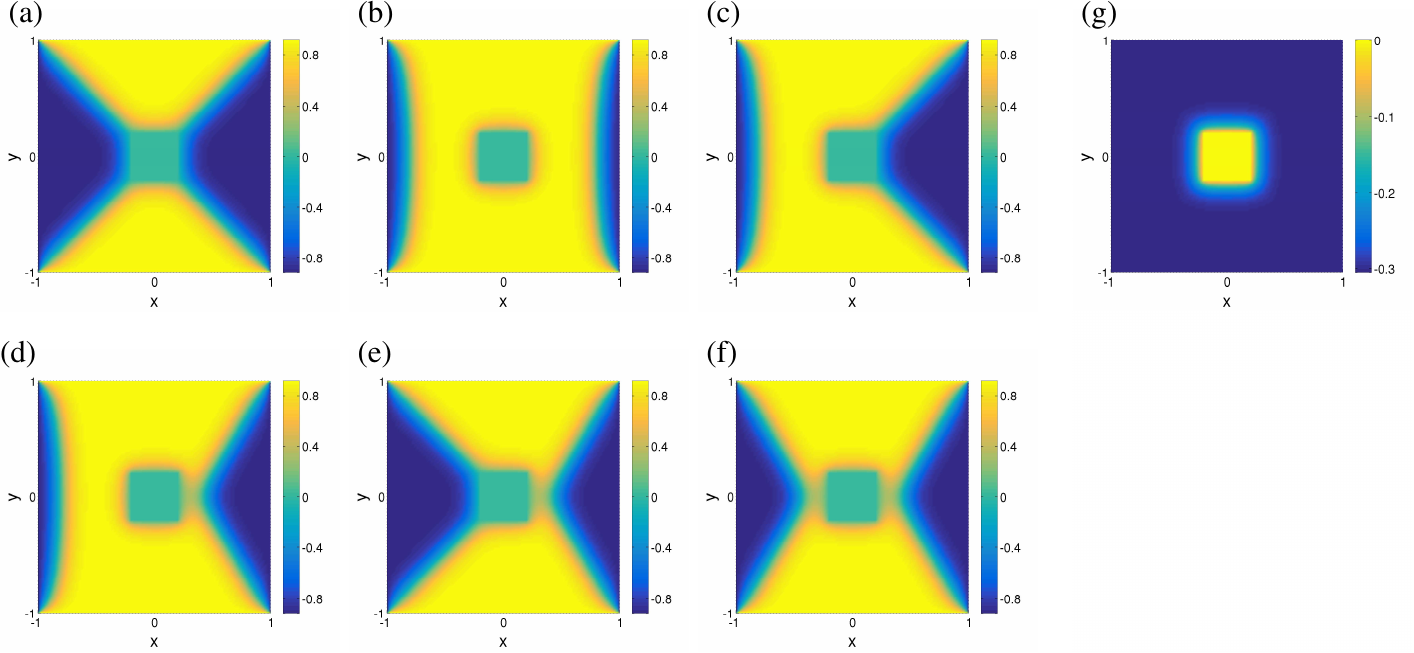}
\end{overpic}
\caption{(a)--(f) Critical Points for $\rho = 0.2$ ($\bar{\lambda}^2 = 200$), (g) the profile of $q_3$ in all the critical points.}\label{CP_rho_0_2}
\end{figure}

For $\rho = 0.02$, we only find 3 critical points (WORS and 2 BD solutions),
shown in Fig.~\ref{CP_rho_0_02}. Here, the WORS is no longer a metastable state but acts as a saddle point of the system connecting two stable BD equilibria in the restricted class.
\begin{figure}[!htb]
\centering
\begin{minipage}{0.6\textwidth}
\begin{overpic}[width = \linewidth]{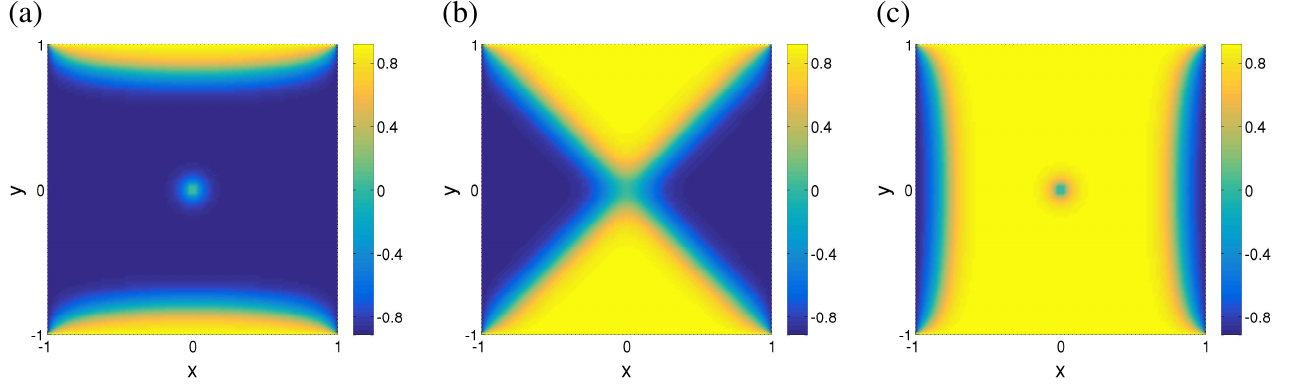}
\end{overpic}
\end{minipage}
\caption{Critical Points for $\rho = 0.02$ ($\bar{\lambda}^2 = 200$), shown by $q_1$.}\label{CP_rho_0_02}
\end{figure}

\subsection{General Case}
The critical points of the form (\ref{q13}) are a two-dimensional subset of LdG critical points. 
We have also calculated critical points of the general form 
\begin{equation}\label{q12345}
\begin{aligned}
\Qvec(x,y) & = q_1(x, y)(\e_x \otimes \e_x - \e_y \otimes \e_y) + q_2(x, y) (\e_x \otimes \e_y + \e_y \otimes \e_x) \\
       & + q_3(x, y) (2 \e_z \otimes \e_z - \e_x \otimes \e_x - \e_y \otimes \e_y) \\
       & + q_4(x, y)(\e_x \otimes \e_z + \e_z \otimes \e_x) + q_5(x, y)(\e_y \otimes \e_z + \e_z \otimes \e_y ), \\
\end{aligned}
\end{equation}
which exploit all five degrees of freedom of LdG $\Qvec$-tensor, subject to the boundary condition (\ref{BC-general}).

\begin{figure}[!htb]
\begin{center}
\includegraphics[width = 0.9 \linewidth]{./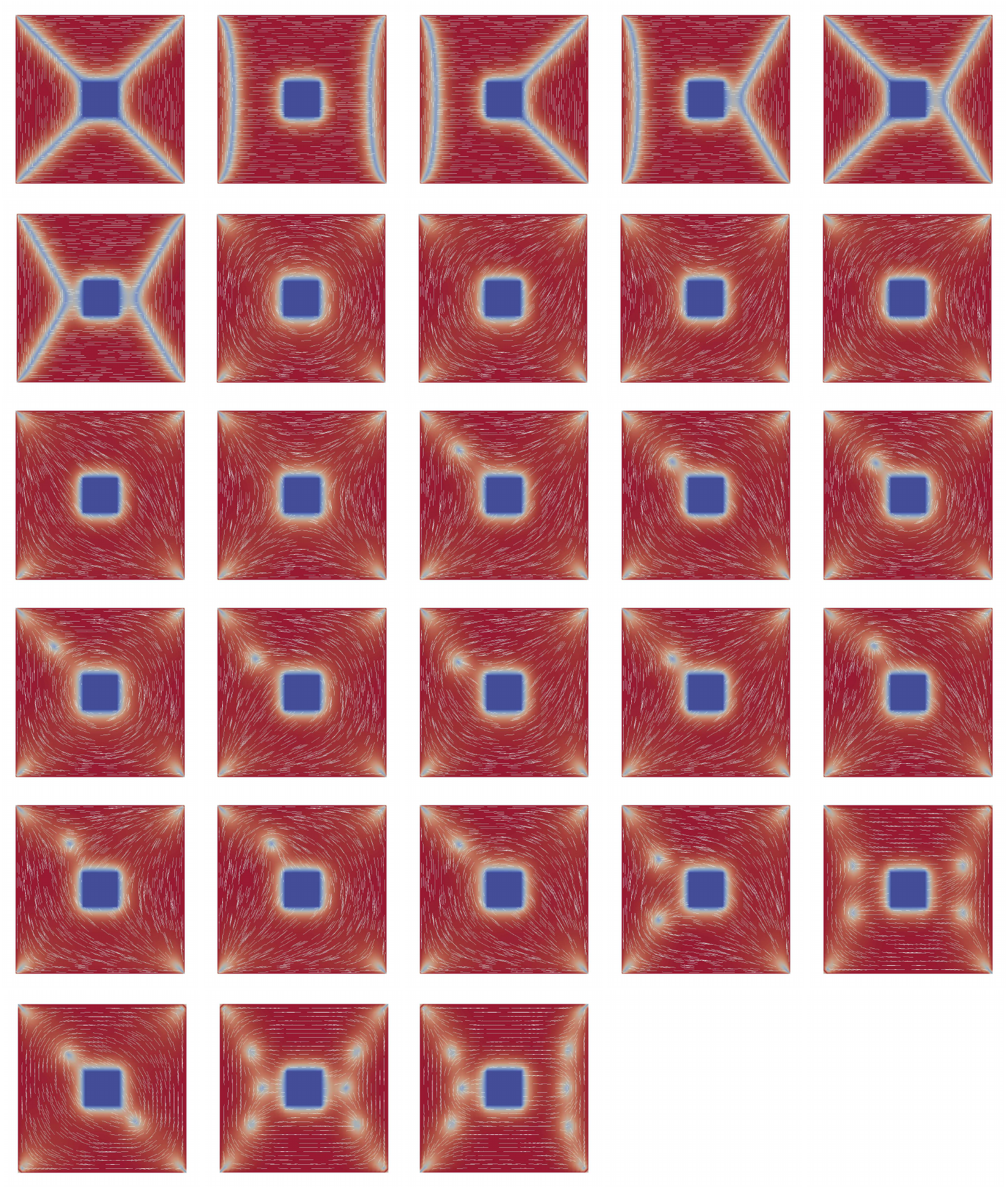}
\end{center}
\caption{Critical Points found by deflation techniques for $\rho = 0.2$ ($\bar{\lambda}^2 = 200$), which are shown by the largest eigenvalue of $\Qvec$ (blue at 0, increasing to red) and the director profiles(transparent white lines).}\label{CP_q123}
\end{figure}

The critical points of the form (\ref{q12345}) satisfy the Euler-Lagrange equation
\begin{equation}
\begin{cases}
& \Delta q_1 = \bar{\lambda}^2 \left( \dfrac{A}{2C} q_1 + \dfrac{B}{2C} \Bigl( 2 q_1 q_3 - \dfrac{1}{2}(q_4^2 - q_5^2) \Bigr) + \Bigl( \frac{1}{2} \tr (\Qvec^2) \Bigr) q_1  \right) \\  
& \Delta q_2 = \bar{\lambda}^2 \left(  \dfrac{A}{2C} q_2 + \dfrac{B}{2C}\Bigl( 2 q_2 q_3 - q_4 q_5 \Bigr) +  \Bigl( \frac{1}{2} \tr (\Qvec^2)  \Bigr) q_2  \right) \\  
& \Delta q_3 = \bar{\lambda}^2 \left( \dfrac{A}{2C} q_3 + \dfrac{B}{2C} \Bigl( \dfrac{1}{3}(q_1^2 + q_2^2) - q_3^2 - \dfrac{1}{6}(q_4^2 + q_5^2) \Bigr) + \Bigl( \frac{1}{2} \tr (\Qvec^2)  \Bigr) q_3 \right) \\
& \Delta q_4 = \bar{\lambda}^2 \left( \dfrac{A}{2C} q_4 - \dfrac{B}{2C} \Bigl( q_3 q_4 + q_1 q_4 + q_2 q_5 \Bigr) + \Bigl( \frac{1}{2} \tr (\Qvec^2)  \Bigr) q_4 \right) \\
& \Delta q_5 = \bar{\lambda}^2 \left( \dfrac{A}{2C} q_3 - \dfrac{B}{2C} \Bigl( q_3 q_5 - q_1 q_5 + q_2 q_4 \Bigl) + \Bigl( \frac{1}{2} \tr (\Qvec^2)  \Bigr) q_5 \right), \\
\end{cases}
\end{equation} 
where $\tr(\Qvec^2) = 2q_1^2 + 2q_2^2 + 6q_3^2 + 2q_4^2 + 2q_5^2$.

For $\rho = 0.2$, we find 28 critical points after discarding the rotational symmetries, which are shown in Fig. \ref{CP_q123}. They all satisfy $q_4 = q_5 = 0$, have two or three degrees of freedom and have $\e_z$ as a fixed eigenvector. Further, we haven't found any ESC-like configurations with $q_3 > 0$ around the isotropic inclusion. Actually, the profile of $q_3$ is almost the same for all the numerically computed critical points, as shown in Fig. \ref{CP_rho_0_2}(g).

For small $\rho$, we find two critical points with $q_4 \neq 0$ and $q_5 \neq 0$, as shown in Fig. \ref{ES_rho_0_02}(a) and (b) for $\rho = 0.02$, by using special initial guesses. The initial condition is uniaxial around the isotropic inclusion and the leading eigenvector escapes into the third dimension around the isotropic core with winding number $\pm 1$. The profiles of $q_i$ and biaxiality parameter $\beta^2$ in configuration \ref{ES_rho_0_02}(a) are shown in Fig. \ref{ES_rho_0_02}(c)-(h). We note that for such critical points, $\mathbf{Q}$ is almost uniaxial around the isotropic inclusion with $q_3>0$, so that we have a positively ordered uniaxial state with $\e_z$ as the director around the isotropic core. These two types of critical points do not exist for relatively large $\rho$ ($\rho > 0.052$ as indicated by \ref{ES_rho_0_02}(i)). We do not analyse this further in this paper, largely because these escaped critical points seem rare for this model problem. We expect these escaped critical points to occur more frequently for three-dimensional systems and not for severely confined systems such as the ones considered in this manuscript.
\begin{figure}[!htp]
\centering
\begin{overpic}[width = \linewidth]{./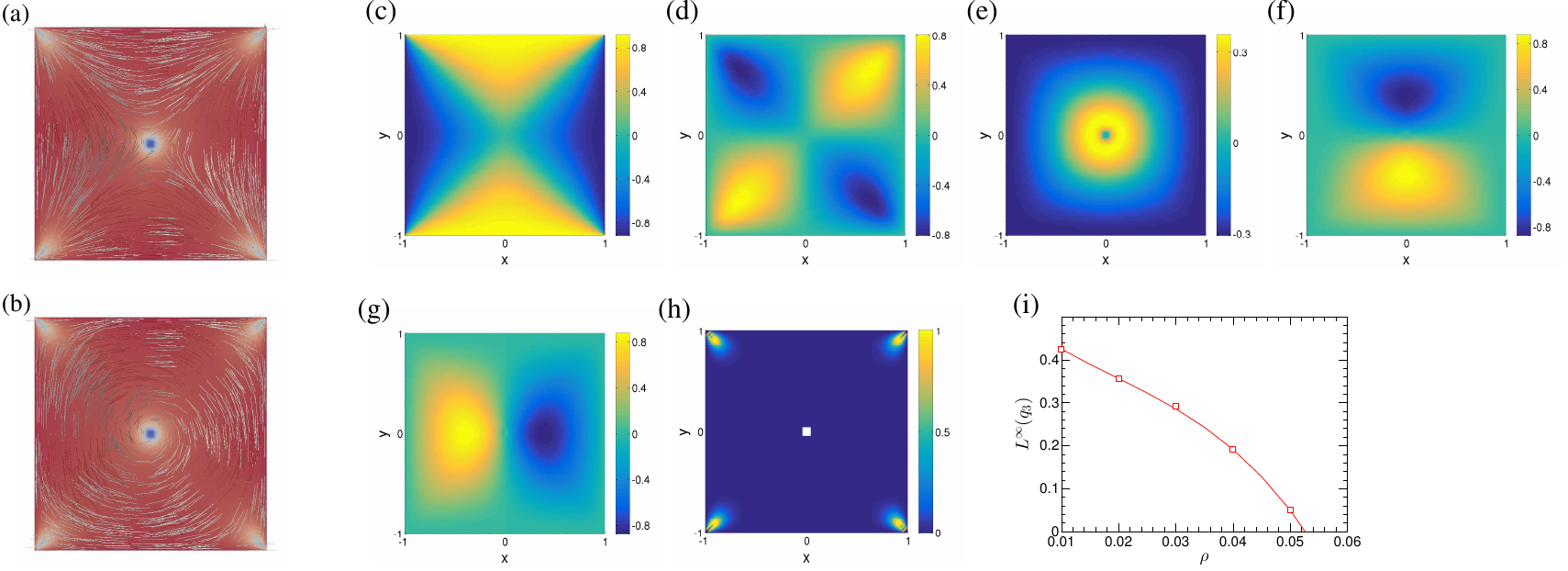}
\end{overpic}
\caption{(a)-(b) Two critical points with $q_4 \neq 0$ and $q_5 \neq 0$ for $\rho = 0.02$, shown by the largest eigenvalue of $\Qvec$ (blue at 0, increasing to red) and the director profile. (c)-(h) The profiles of $q_i$ and $\beta$ in configuration(a).  (i) $L^{\infty}(q_3)$ as the function of $\rho$ in configuration (a). }\label{ES_rho_0_02}
\end{figure}

\section{Conclusion}
\label{sec:conclusion}

We study LdG critical points on a square domain with an isotropic square inclusion, with tangent boundary conditions on the outer square edges. We prove the existence of a WORS-type critical point, featured by a distinctive negatively ordered uniaxial cross along the diagonals, connecting the vertices of the inner and outer squares. We partition the LdG critical points into three categories: critical points with two degrees of freedom which have a constant eigenframe (to which the WORS and BD solutions belong), critical points with three degrees of freedom which have $\e_z$ as a fixed eigenvector and critical points which exploit all five degrees of freedom.
In the two-dimensional sub-class, there are effectively three competitors: the WORS configuration, the BD configuration with negatively ordered uniaxial transition layers along a pair of opposite square edges and a third configuration somewhere in between the WORS and the BD (retains half the diagonal cross and one edge transition layer). The WORS typically loses stability with respect to BD-type solutions in the two-dimensional setting as the square size increases or as the aspect ratio of the domain decreases. It is interesting that whilst the WORS is globally stable with respect to all perturbations in certain parameter regimes, the BD solution is never a stable critical point with respect to in-plane perturbations. In fact, the in-plane perturbations are the most effective in de-stabilizing either the WORS or BD solutions, which can be intuitively understood since these perturbations distort the eigenvectors in the square plane to reduce the elastic energy (the Dirichlet energy density term in (\ref{eq:rescaled})).
We carry out a fairly exhaustive study of the LdG critical points in the reduced three-dimensional setting and recover up to twenty eight critical points for $\bar{\lambda}^2 = 200$ and $\rho = 0.2$. For moderately large values of the square size and small aspect ratios, we expect the stable solutions to have either the diagonal or rotated profiles, without any negatively ordered uniaxial defects in the domain interior. The diagonal and rotated solutions have been studied extensively in a batch of papers \cite{luo2012, tsakonas, lewissoftmatter}; informally speaking, the corresponding LdG $\mathbf{Q}$ tensor can be written as
$$ \mathbf{Q} = q \left(\mathbf{a}(x,y)\otimes \mathbf{a}(x,y) - \mathbf{I}_2/2 \right) + q_3 \left(2 \e_z \otimes\e_z - \e_x \otimes \e_x - \e_y \otimes \e_y \right),$$
where  $\mathbf{a}$ is an inhomogeneous two-dimensional unit-vector in the square plane (e.g. roughly pointing along one of the square diagonals for the diagonal state) and $\mathbf{I}_2$ is the $2\times 2$ identity matrix. 
These solutions necessarily have three degrees of freedom. For the model problem considered here, as heuristically explained by the analysis in \cite{sternberggolovaty2015}, we do not expect to have stable critical points with full five degrees of freedom, with the exception of perhaps very small isotropic square inclusions. It would be interesting to study the LdG critical points on a three-dimensional rectangular box, where the vertical dimension is much smaller than the cross-sectional dimension, and then gradually increase the vertical dimension to check when the out-of-plane perturbations destabilise the WORS or BD solutions. This would elucidate the existence and stability of truly five-dimensional LdG critical points and we will investigate this further in future work.

\section{Acknowledgments}

Part of this work was carried out when Y.W. was visiting the University of Bath, he would like to thank the University of Bath
and Keble College for their hospitality. He also would like to thank the Elite Program of Computational and Applied Mathematics for PhD Candidates in Peking University and his Ph.D. advisor Professor Pingwen Zhang, for his constant
support and helpful advice.
G.C.'s  research  was supported by the European Research Council under the
European Union's Seventh Framework Programme (FP7/2007-2013) / ERC grant agreement n° 291053;
by the Basque Government through the BERC 2014-2017 program; and by the Spanish
Ministry of Economy and Competitiveness MINECO: BCAM Severo Ochoa accreditation SEV-2013-0323.
A.M. is supported by an EPSRC Career Acceleration Fellowship EP/J001686/1 and EP/J001686/2 and an OCIAM Visiting Fellowship, the Keble Advanced Studies Centre. She would also like to thank the Chinese Academy of Sciences where this collaboration was initiated and the Banff International Research Station where the three authors met in November 2017. The authors would like to thank Professor Paul Milewski for helpful discussions about the numerical simulations.


\end{document}